%%%%%%%%%%%%%%%%%%%%%%%%%%%%%%%%%%%%%%%%%%%%%%%%%%%%%%%%%%%%%%%%%%%%%%%%%%%%%%%%
%
% main.tex
%
% main LaTeX-file
%
%%%%%%%%%%%%%%%%%%%%%%%%%%%%%%%%%%%%%%%%%%%%%%%%%%%%%%%%%%%%%%%%%%%%%%%%%%%%%%%%

\documentclass[a4wide,12pt,oneside]{article}

%%%%%%%%%%%%%%%%%%%%%%%%%%%%%%%%%%%%%%%%
%
% include packages
% definitions
%
\usepackage{amsmath}
\usepackage{amssymb}
\usepackage{amsthm}

\usepackage{t1enc}
\usepackage[english]{babel}
\usepackage{verbatim} 
\usepackage{graphicx,bm,xcolor}
\usepackage[dvips]{epsfig}
\usepackage{url}
\usepackage{tikz}
\usetikzlibrary{shapes}
\usepackage{bigdelim}
\usepackage{subcaption}
\usepackage{booktabs, tabularx}

\setlength{\parskip}{.5em}

%%%%%%%%%%%%%%%%%%%%%%%%%%%%%%%%%%%%%%%%%%%%%%%%%%%%%%%%%%%%%%%%%%%%%%%%%%%%%%%%
%% Bibstyle
%%
\bibliographystyle{abbrv}

%%%%%%%%%%%%%%%%%%%%%%%%%%%%%%%%%%%%%%%%%%%%%%%%%%%%%%%%%%%%%%%%%%%%%%%%%%%%%%%%
%% THEOREMS

\theoremstyle{plain} % default
\newtheorem{theorem}{Theorem}[section]

\newtheorem{lemma}[theorem]{Lemma}

\newtheorem{remark}[theorem]{Remark}

\theoremstyle{definition} %
\newtheorem{assumption}{Assumption}

\theoremstyle{remark} %

%
% layout definitions
%
%%%%%%%%%%%%%%%%%%%%%%%%%%%%%%%%%%%%%%%%%%%%%%%%%%%%%%%%%%%%%%%%%%%%%%%%%%%%%%%%
%
% pagestyle.tex
%
% page layout definitions
%
% Diploma Thesis
% (c) 2003/04 Clemens Pechstein
% 
%%%%%%%%%%%%%%%%%%%%%%%%%%%%%%%%%%%%%%%%%%%%%%%%%%%%%%%%%%%%%%%%%%%%%%%%%%%%%%%%

\setlength{\textwidth}{15.4cm}
\setlength{\textheight}{22.2cm}
\setlength{\oddsidemargin}{0.6cm}
\setlength{\evensidemargin}{0.0cm}
\setlength{\topmargin}{0pt}

\pagestyle{headings}
\nonfrenchspacing
\setlength{\parindent}{0cm}

%\sloppy

% EOF

%%%%%%%%%%%%%%%%%%%%%%%%%%%%%%%%%%%%%%%%%%%%%%%%%%%%%%%%%%%%%%%%%%%%%%%%%%%%%%%%
%
% Frontpage
%
\title{A IETI-DP method for discontinuous Galerkin discretizations in Isogeometric Analysis with inexact local solvers}
\author{{Monica Montardini\footnote{Università degli Studi di Pavia, \texttt{monica.montardini@unipv.it}},
Giancarlo Sangalli\footnote{Università degli Studi di Pavia, \texttt{giancarlo.sangalli@unipv.it}},
Rainer Schneckenleitner\footnote{Tampere University, \texttt{rainer.schneckenleitner@tuni.fi}},
}\\{
Stefan Takacs\footnote{Johannes Kepler University Linz, \texttt{stefan.takacs@numa.uni-linz.ac.at}},
and
Mattia Tani\footnote{IMATI-CNR, \texttt{mattia.tani@imati.cnr.it}}
}}
\date{\today}

%##############################################################################%
\begin{document}
%##############################################################################%
\maketitle

%%%%%%%%%%%%%%%%%%%%%%%%%%%%%%%%%%%%%%%%%%%%%%%%%%%%%%%%%%%%%%%%%%%%%%%%%%%%%%%%
% 
% abstract:
%  - english abstract
%
We construct solvers for an isogeometric multi-patch discretization, where
the patches are coupled via a discontinuous Galerkin approach, which allows
the consideration of discretizations that do not
match on the interfaces. We solve the resulting linear system using
a Dual-Primal IsogEometric Tearing and Interconnecting (IETI-DP) method.
We are interested in solving the arising patch-local problems using
iterative solvers since this allows the reduction of the memory footprint.
We solve the patch-local problems approximately using the Fast
Diagonalization method, which is known to be robust in the grid size and
the spline degree. To obtain the tensor structure
needed for the application of the Fast Diagonalization method, we
introduce an orthogonal splitting of the local function spaces.
We present a convergence theory that confirms that the
condition number of the preconditioned system only grows poly-logarithmically
with the grid size. The numerical experiments confirm this finding. Moreover, they
show that the convergence of the overall solver only mildly
depends on the spline degree. We observe a mild reduction of the computational times and a significant reduction
of the memory requirements in comparison to standard IETI-DP solvers using
sparse direct solvers for the local subproblems. Furthermore, the experiments
indicate good scaling behavior on distributed memory machines.

%%%%%%%%%%%%%%%%%%%%%%%%%%%%%%%%%%%%%%%%%%%%%%%%%%%%%%%%%%%%%%%%%%%%%%%%%%%%%%%%
%
% introduction
%
\section{Introduction}
\label{sec:1}
Isogeometric Analysis (IgA), introduced in the seminal paper~\cite{CottrellHughes:2005}, is an approach to solve Partial Differential Equations (PDEs). IgA has been invented to enhance the interaction of Computer Aided Design (CAD) and numerical simulation. While in standard Finite Element Methods (FEM), the computational domain is usually meshed, IgA is directly based on the parametrization of the computational domain in terms of a geometry function.
In practice, the geometry is usually composed of several patches (multi-patch IgA) and each patch has its own geometry function. To be able to set up continuous Galerkin methods, both
the geometry function and the function spaces have to agree on the interface. This is not
the case for discontinuous Galerkin methods,
which allows to consider non-matching geometries and function spaces.
We focus on the Symmetric Interior Penalty discontinuous Galerkin (SIPG) approach, cf.~\cite{Arnold:1982}. It is worth noting that the SIPG method even allows geometry representations, where inaccuracies in the representation yield gaps between or overlaps of neighboring patches, cf.~\cite{HoferLanger:2019b}. The interfaces between the patches are not necessarily conforming, i.e., T-junctions are possible, cf.~\cite{SchneckenleitnerTakacs:2022}.

The discretization yields a large-scale linear system that should be solved efficiently.
Since we consider non-overlapping geometries, iterative substructuring solvers are an obvious choice. 
The Finite Element Tearing and Interconnecting (FETI) method, introduced in~\cite{FarhatRoux:1991a}, 
is one of the most powerful substructuring solvers. Particularly powerful are the
Dual-Primal FETI (FETI-DP) methods. The idea of FETI has already been adapted to IgA,
cf.~\cite{Kleiss:2012,Hofer:2019a,SchneckenleitnerTakacs:2019,BozyMontardiniSangalliTani:2020},
and is sometimes called Dual-Primal IsogEometric Tearing and Interconnecting (IETI-DP) method.

We consider a IETI-DP method for SIPG discretizations of PDEs, cf.~\cite{HoferLanger:2017c}. In the recent papers~\cite{SchneckenleitnerTakacs:2020,SchneckenleitnerTakacs:2022}, we have developed a convergence analysis that gives
condition number estimates of the preconditioned IETI system in terms of the mesh size and the spline degree. In that paper, we have only considered the case that all local subproblems are solved
with sparse direct solvers.

According to~\cite{BadiaMartin:2015}, there is a trend to use more cores with less memory per core. This trend demands for less memory consuming solvers to avoid memory overflows. Regarding memory cost, iterative solvers are more advantageous in comparison to direct solvers. FETI-DP type methods with iterative solvers for continuous Galerkin discretizations have been introduced, e.g. in~\cite{KlawonnRheinbach:2007}. There, the authors showed results for the local Neumann problems as well as an approach to use an iterative method for the coarse grid problem. For BDDC, which is closely related to FETI-DP type methods, inexact variants are introduced e.g. in~\cite{LiWidlund:2007,Dohrmann:2007} and the references therein.

We are interested in applying such a domain decomposition solver to SIPG discretizations in IgA.
We present a preconditioning strategy to approximately solve the local Neumann problems
using the Fast Diagonalization (FD) method, which satisfies condition number bounds
that are robust in the spline degree and in the grid size, cf.~\cite{SangalliTani:2016}. The FD method requires that the
discretization has tensor-product structure. For the local Neumann problems, this structure is
broken due to the elimination of the primal degrees of freedom and due to the jump terms
in the SIPG formulation. We introduce an orthogonal splitting of the degrees of freedom into a
space with tensor-product structure and a small space of remaining functions.
The first space allows the use of the FD method. For the second space, we can afford a direct
solver. We show that -- concerning the dependence on the grid size -- the resulting IETI-DP method 
satisfies the same convergence number bounds as the IETI-DP variant with direct local solvers
analyzed in~\cite{SchneckenleitnerTakacs:2020}. Concerning the dependence on the spline degree,
the numerical results show that the dependence on the spline degree is comparable to that of
the variant analyzed in~\cite{SchneckenleitnerTakacs:2020}.

The remainder of the paper is organized as follows: In Section~\ref{sec:2}, we give a description of the model problem and define the considered discretization. In the subsequent Section~\ref{sec:3}, we introduce the IETI-DP solver together with the proposed preconditioner. In the last part of this section we state an outline of the solution algorithm. Section~\ref{sec:4} is devoted to the convergence analysis. Numerical experiments are provided in Section~\ref{sec:5}. In the last Section~\ref{sec:6}, we draw some conclusions. 

\section{Preliminaries}
\label{sec:2}
\subsection{The model problem and its discretization}
We consider the following model problem: given a bounded Lipschiz domain $\Omega \subset \mathbb{R}^2$ and a source function $f \in L^2(\Omega)$, we want to find a function $u \in H^1_0(\Omega)$ such that 
\begin{align}
	\label{prob:continuous}
	\int_{\Omega} \nabla u \cdot \nabla v \; \mathrm{d}x = \int_{\Omega} fv \; \mathrm{d}x \quad \mbox{ for all } v \in H^1_0(\Omega).
\end{align}
Here and in what follows, the spaces $L^2(\Omega)$ and $H^1(\Omega)$ denote the usual Lebesgue and Sobolev spaces, respectively. These spaces are equipped with the usual norms $\| \cdot \|_{L^2(\Omega)}$, $\| \cdot \|_{H^1(\Omega)}$ and seminorm $| \cdot |_{H^1(\Omega)}$. The space $H^1_0(\Omega) \subset H^1(\Omega)$ is the subspace of functions which vanish on the (Dirichlet) boundary $\partial \Omega$. We assume that the computational domain is the union of $K$ non-overlapping patches $\Omega^{(k)}$, i.e.,
\[
\overline{\Omega} = \bigcup_{k = 1}^{K} \overline{\Omega^{(k)}}, \qquad
\Omega^{(k)} \cap \Omega^{(\ell)} = \emptyset  \mbox{ for all } k \neq \ell, 
\] 
where $\overline{T}$ denotes the closure of $T$.
Each patch $\Omega^{(k)}$ is parameterized by a geometry function 
\[
G_k: \widehat{\Omega} := (0,1)^2 \mapsto \Omega^{(k)} := G_k(\widehat{\Omega}) \subset \mathbb{R}^2,
\]
which can be continuously extended to $\overline{\widehat{\Omega}}$. {Throughout the paper, we denote any entity associated to the parameter domain with a hat symbol.}
As in \cite{SchneckenleitnerTakacs:2020}, we assume that there are
no T-junctions and that the geometry functions do not have singularities. This is formalized by the following two assumptions.
\begin{assumption}
	\label{ass:common element}
	For any $k \neq \ell$, the intersection $\overline{\Omega^{(k)}} \cap \overline{\Omega^{(\ell)}}$ is either a common vertex, a common edge (including the vertices) or the empty set.
\end{assumption}
\begin{assumption}
	\label{ass:non-singularity}
	There is a constant $C_1 > 0$ such that the patch diameters $H_{k} > 0$ satisfy 
	\[
	\| \nabla G_k\|_{L^\infty(T)} \leq C_1 H_{k} \quad \text{and} \quad \|( \nabla G_k)^{-1}\|_{L^\infty(T)} \leq C_1 \frac{1}{H_{k}}
	\]
	for $T \in \{\widehat{\Omega}, \partial \widehat{\Omega}\}$ and all $k = 1, \dots, K$.
\end{assumption}

The edge shared by two patches $\Omega^{(k)}$ and $\Omega^{(\ell)}$ is denoted by $\Gamma^{(k,\ell)}$ and its pre-image by $\widehat{\Gamma}^{(k,\ell)} := G_k^{-1}(\Gamma^{(k,\ell)})$. If a patch $\Omega^{(k)}$ contributes to the (Dirichlet) boundary, we denote the corresponding edges of the parameter domain by $\widehat{\Gamma}_D^{(k)} = G_k^{-1}(\partial \Omega^{(k)} \cap \partial \Omega)$. 

For each patch $\Omega^{(k)}$, the indices of the neighboring patches $\Omega^{(\ell)}$ are collected in the set 
\[
\mathcal{N}_\Gamma(k) := \{\ell \neq k:\Omega^{(k)} \text{ and } \Omega^{(\ell)} \text{ share a common edge} \}.
\]

In the following, we describe the involved function spaces. Let $p \in \mathbb{N} := \{ 1,2,3, \dots \}$ be a given spline degree. For better readability, we use the same spline degree throughout the whole domain.
Let $\Xi := \left(\xi_1, \dots, \xi_{n+p+1} \right)$ be a $p$-open knot vector with knots $0=\xi_1 \leq \xi_2 \leq \dots \leq \xi_{n+p+1} = 1$.
The Cox-de Boor formula, see, e.g., \cite{Boor}, provides a B-spline basis $(B[p, \Xi, i])_{i = 1}^n$ for the univariate B-spline space
\[
\mathcal{S}[p,\Xi] := \text{span} \{B[p, \Xi, 1], \dots, B[p, \Xi, n]\}
\]
with mesh size $\widehat{h} = \max_{i = 1, \dots, n+p} \{|\xi_{i+1} - \xi_{i}|\}$.
For any $k \in \{ 1, \dots, K \}$, we define the tensor product B-spline space $\widehat{V}^{(k)}$ on the parameter domain with the $p$-open knot vectors $\Xi_1^{(k)}$ and $\Xi_2^{(k)}$, as
\[
\widehat{V}^{(k)} := \{ \widehat{v}^{(k)} \in \mathcal{S}[p, \Xi_1^{(k)}] \otimes \mathcal{S}[p, \Xi_2^{(k)}]: \widehat{v}^{(k)}|_{\widehat{\Gamma}_D^{(k)}} = 0 \},
\]
where $v|_T$ denotes the restriction of $v$ to $T$
and $\otimes$ denotes the tensor product. 
A basis for $\widehat{V}^{(k)}$ is given by the tensor-product B-spline basis functions that vanish on the Dirichlet boundary, which we denote by
$
\widehat{\Phi}^{(k)} := (\widehat{\phi}^{(k)}_i)_{i = 1}^{N^{(k)}}
$.
For a formal introduction of the basis $\widehat{\Phi}^{(k)}$, we refer  to~\cite{SchneckenleitnerTakacs:2020}.

We assume that the grids are quasi-uniform.
\begin{assumption}
	\label{ass:quasi-uniformity}
	There is a constant $C_2 > 0$ such that for every $k = 1, \dots, K$, 
	\[
	C_2 \widehat{h}_{k} \leq \xi_{\delta, i+1}^{(k)} - \xi_{\delta, i}^{(k)} \leq \widehat{h}_{k}
	\]
	holds on every non-empty knot span $(\xi_{\delta, i}^{(k)}, \xi_{\delta, i+1}^{(k)})$ with $i = 1, \dots, n_{\delta}^{(k)}+p$ for the parametric directions $\delta = 1,2$.
\end{assumption}

The discrete function spaces  on the physical domain and their bases
are defined using the pull-back principle, i.e.,
\begin{align}
	\nonumber
	V^{(k)} &:= \{v^{(k)} = \widehat{v}^{(k)} \circ {G}_k^{-1} : \widehat{v}^{(k)} \in \widehat{V}^{(k)} \},
	\\
	\label{def:ordered physical basis}
	\Phi^{(k)} &:= (\phi_i^{(k)})_{i=1}^{N^{(k)}} \quad \text{ with } \quad \phi_i^{(k)} := \widehat{\phi}_i^{(k)} \circ G_k^{-1}.
\end{align}
We call $h_{k} := \widehat{h}_{k}H_{k}$ the grid size on the physical
domain. 

We consider the
Symmetric Interior Penalty discontinuous Galerkin (SIPG) formulation of the problem \eqref{prob:continuous},
which reads as follows. Find $u := (u^{(1)},\ldots,u^{(K)}) \in V := V^{(1)} \times \dots \times V^{(K)}$ such that
\begin{align}
	\label{discreteVarProb}
	a_h(u,v)= \langle f,v\rangle  
	\quad \mbox{for all}\quad v \in V,
\end{align}
where 
\begin{align*}
	a_h(u,v) &:= \sum_{k=1}^{K} \left( 
	a^{(k)}(u,v) + m^{(k)}(u,v) + r^{(k)}(u,v)
	\right),\\
	a^{(k)}(u,v) &:= \int_{\Omega^{(k)}} \nabla u^{(k)} \cdot \nabla v^{(k)} \; \textrm{d}x, \\
	m^{(k)}(u,v) &:= \sum_{\ell \in \mathcal{N}_\Gamma(k)} \int_{\Gamma^{(k,\ell)}} \frac{1}{2} 
	\left( 
	\frac{\partial u^{(k)}}{\partial n_k}(v^{(\ell)} - v^{(k)}) + 
	\frac{\partial v^{(k)}}{\partial n_k}(u^{(\ell)} - u^{(k)})
	\right) \; \textrm{d}s, \\
	r^{(k)}(u,v) &:= \sum_{\ell \in \mathcal{N}_\Gamma(k)} \int_{\Gamma^{(k,\ell)}} \frac{\delta}{h_{k\ell}}   
	(u^{(\ell)} - u^{(k)})(v^{(\ell)} - v^{(k)}) \; \textrm{d}s,\\
	\langle f,v\rangle &:= \sum_{k=1}^K \int_{\Omega^{(k)}} fv^{(k)} \; \mathrm{d}x ,
\end{align*}
$h_{k\ell}:=\min\{h_k, h_\ell\}$, $n_k$ is the outward unit normal vector on $\partial \Omega^{(k)}$, and $\delta > 0$ is some suitably chosen penalty parameter. We choose $\delta$ such that the problem~\eqref{discreteVarProb} is bounded and coercive in the dG-norm $\| \cdot \|_{d}$, induced by the scalar product
\[
d(u,v) := \sum_{k=1}^{K} d^{(k)}(u,v) = \sum_{k=1}^{K} \left( a^{(k)}(u,v) + r^{(k)}(u,v) \right).
\] 
\cite[Theorem~3.3]{Takacs:2019} guarantees that there is a choice of $\delta > 0$ independent of $h_k$ and $H_k$ such that the bilinear form is coercive and bounded. 

\section{The IETI-DP solver}
\label{sec:3}
\subsection{The overall solver}

This section is devoted to the introduction of the IETI-DP solver for the problem~\eqref{discreteVarProb}. We start with the introduction of the local function spaces that are required for the setup of
the IETI-DP solver. For discontinuous Galerkin discretizations, an appropriate choice is not completely straight forward. We follow the approach that has been proposed in \cite{DryjaGalvis:2013}
for standard FEM and extended to IgA in \cite{HoferLanger:2017c,SchneckenleitnerTakacs:2020}. 

The local function spaces are given by 
\begin{align}
	\label{def:extended space}
	V_e^{(k)} := V^{(k)} \times \prod_{\ell \in \mathcal{N}_\Gamma(k)} V^{(k,\ell)}, \qquad \mbox{for } k=1,\dots,K,
\end{align} 
where $V^{(k,\ell)}$ is the trace of $V^{(\ell)}$ on $\Gamma^{(k,\ell)}$.
A function $v_e^{(k)} \in V_e^{(k)}$ is composed of entities 
\begin{align}
	\label{def:representation}
	v_e^{(k)} = \left( 
	v^{(k)}, (v^{(k,\ell)})_{\ell \in \mathcal{N}_\Gamma(k)}
	\right),
\end{align}
where $v^{(k)} \in V^{(k)}$ and $v^{(k,\ell)} \in V^{(k,\ell)}$.
Here and in what follows, we use this notation to refer to
components of any function, like $u_e^{(k)}$, $\widehat u_e^{(k)}$
or $\widehat v_e^{(k)}$.

Let $\Phi^{(k,\ell)}$ be the collection of traces of basis functions in $\Phi^{(\ell)}$ that do not vanish on $\Gamma^{(k,\ell)}$. 
Note that $\Phi^{(k,\ell)}$ forms a basis of $V^{(k,\ell)}$ and 
we define a basis $\Phi_e^{(k)}=(\phi_{e,i}^{(k)})_{i=1}^{N_e^{(k)}}$
for the space $V_e^{(k)}$ by collecting
\begin{itemize}
	\item the basis functions in the basis $\Phi^{(k)}$ and
	\item for each $\ell\in\mathcal N_\Gamma(k)$, the basis functions
	in the bases $\Phi^{(k,\ell)}$.
\end{itemize}
This choice is visualized in Figure~\ref{fig:ai}, where each symbol represents one basis function. A formal introduction of the basis
is given in~\cite{SchneckenleitnerTakacs:2020}.

\begin{figure}[th]
	\begin{center}
		\resizebox{20em}{!}{
			\begin{tikzpicture}
				\def\shift{0.5}
				\fill[gray!20] (-0.2,0) -- (1.5,0) -- (1.5,1.7) -- (-0.2,1.7);
				\fill[gray!20] (-0.2,-1.5-\shift) -- (1.5,-1.5-\shift) -- (1.5,-3.2-\shift) -- (-0.2,-3.2-\shift);
				\fill[gray!20] (4.7+\shift,0) -- (3.0+\shift,0) -- (3.0+\shift,1.7) -- (4.7+\shift,1.7);
				\fill[gray!20] (4.7+\shift,-1.5-\shift) -- (3.0+\shift,-1.5-\shift) -- (3.0+\shift,-3.2-\shift) -- (4.7+\shift,-3.2-\shift);
				
				% Subdomains
				\draw (-0.2,0) -- (1.5,0) -- (1.5,1.7) node at (.25,2) {$\Omega^{(1)}$};
				\draw (-0.2,-1.5-\shift) -- (1.5,-1.5-\shift) -- (1.5,-3.2-\shift) node at (.25,-3.5-\shift) {$\Omega^{(2)}$};
				\draw (4.7+\shift,0) -- (3.0+\shift,0) -- (3.0+\shift,1.7) node at (4.75+\shift,2) {$\Omega^{(3)}$};
				\draw (4.7+\shift,-1.5-\shift) -- (3.0+\shift,-1.5-\shift) -- (3.0+\shift,-3.2-\shift) node at (4.75+\shift,-3.5-\shift) {$\Omega^{(4)}$};
				
				% Spaces
				\node at (-0.7,1.3) {$V^{(1)}$};
				\node at (-0.7, -0.4) {$V^{(1,2)}$};
				\node at (2.0, 2.0) {$V^{(1,3)}$};
				
				\node at (-0.7,-2.8-\shift) {$V^{(2)}$};
				\node at (-0.7,-1.1-\shift) {$V^{(2,1)}$};
				\node at (2.0,-3.5-\shift) {$V^{(2,4)}$};
				
				\node at (5.2+\shift,1.3) {$V^{(3)}$};
				\node at (2.6+\shift,2.0) {$V^{(3,1)}$};
				\node at (5.2+\shift,-0.4) {$V^{(3,4)}$};
				
				\node at (5.2+\shift,-2.8-\shift) {$V^{(4)}$};
				\node at (5.2+\shift,-1.1-\shift) {$V^{(4,3)}$};
				\node at (2.6+\shift,-3.5-\shift) {$V^{(4,2)}$};
				
				\draw (-0.2,-0.4) -- (1.5,-0.4) {};
				\draw (1.9,0) -- (1.9,1.7) {};
				
				\draw (3.0+\shift,-0.4) -- (4.7+\shift,-0.4) {};
				\draw (2.6+\shift,0) -- (2.6+\shift,1.7) {};
				
				\draw (-0.2,-1.1-\shift) -- (1.5,-1.1-\shift) {};
				\draw (1.9,-1.5-\shift) -- (1.9,-3.2-\shift) {};
				
				\draw (3.0+\shift,-1.1-\shift) -- (4.7+\shift,-1.1-\shift) {};
				\draw (2.6+\shift,-1.5-\shift) -- (2.6+\shift,-3.2-\shift) {};
				
				\draw (1.5,0) node[circle, fill, inner sep = 2pt] (A7) {};
				\draw (1.5,1.0) node[circle, fill, inner sep = 2pt] (A9) {};
				\draw (0.35,0.0) node[circle, fill, inner sep = 2pt] (A11) {};
				\draw (0.35,1.0) node[circle, fill, inner sep = 2pt] (A11) {};
				
				\draw (1.9,0) node[rectangle, fill, inner xsep = 2.5pt, inner ysep = 2.5pt] (A1) {};
				\draw (1.9,0.5) node[rectangle, fill, inner xsep = 2.5pt, inner ysep = 2.5pt] (A2) {};
				\draw (1.9,1.5) node[rectangle, fill, inner xsep = 2.5pt, inner ysep = 2.5pt] (A3) {};
				\draw (0.75,-0.4) node[diamond, fill, inner sep = 2pt] (A5) {};
				\draw (1.5,-0.4) node[diamond, fill, inner sep = 2pt] (A6) {};
				
				\draw (3.0+\shift,0) node[rectangle, fill, inner xsep = 2.5pt, inner ysep = 2.5pt] (B7) {};
				\draw (3.0+\shift,0.5) node[rectangle, fill, inner xsep = 2.5pt, inner ysep = 2.5pt] (B8) {};
				\draw (3.0+\shift,1.5) node[rectangle, fill, inner xsep = 2.5pt, inner ysep = 2.5pt] (B9) {};
				\draw (3.75+\shift,0) node[rectangle, fill, inner xsep = 2.5pt, inner ysep = 2.5pt] (B10) {};
				\draw (3.75+\shift,0.5) node[rectangle, fill, inner xsep = 2.5pt, inner ysep = 2.5pt] (B10) {};
				\draw (3.75+\shift,1.5) node[rectangle, fill, inner xsep = 2.5pt, inner ysep = 2.5pt] (B10) {};
				\draw (4.5+\shift,0) node[rectangle, fill, inner xsep = 2.5pt, inner ysep = 2.5pt] (B11) {};
				\draw (4.5+\shift,0.5) node[rectangle, fill, inner xsep = 2.5pt, inner ysep = 2.5pt] (B11) {};
				\draw (4.5+\shift,1.5) node[rectangle, fill, inner xsep = 2.5pt, inner ysep = 2.5pt] (B11) {};
				
				\draw (2.6+\shift,0) node[circle, fill, inner sep = 2pt] (B1) {};
				\draw (2.6+\shift,1.0) node[circle, fill, inner sep = 2pt] (B3) {};
				\draw (3.0+\shift,-0.4) node[star, fill, inner sep = 2pt] (B4) {};
				\draw (3.75+\shift,-0.4) node[star, fill, inner sep = 2pt] (B5) {};
				\draw (4.5+\shift,-0.4) node[star, fill, inner sep = 2pt] (B6) {};
				
				\draw (1.5,-1.5-\shift) node[diamond, fill, inner sep = 2pt] (C7) {};
				\draw (1.5,-2.2-\shift) node[diamond, fill, inner sep = 2pt] (C8) {};
				\draw (1.5,-3.0-\shift) node[diamond, fill, inner sep = 2pt] (C9) {};
				\draw (0.75,-1.5-\shift) node[diamond, fill, inner sep = 2pt] (C11) {};
				\draw (0.75,-2.2-\shift) node[diamond, fill, inner sep = 2pt] (C11) {};
				\draw (0.75,-3.0-\shift) node[diamond, fill, inner sep = 2pt] (C11) {};
				
				\draw (1.9,-1.5-\shift) node[star, fill, inner sep = 2pt] (C1) {};
				\draw (1.9,-2.0-\shift) node[star, fill, inner sep = 2pt] (C2) {};
				\draw (1.9,-2.75-\shift) node[star, fill, inner sep = 2pt] (C3) {};
				\draw (0.35,-1.1-\shift) node[circle, fill, inner sep = 2pt] (C5) {};
				\draw (1.5,-1.1-\shift) node[circle, fill, inner sep = 2pt] (C6) {};
				
				\draw (3.0+\shift,-1.5-\shift) node[star, fill, inner sep = 2pt] (D7) {};
				\draw (3.0+\shift,-2.0-\shift) node[star, fill, inner sep = 2pt] (D8) {};
				\draw (3.0+\shift,-2.75-\shift) node[star, fill, inner sep = 2pt] (D9) {};
				\draw (3.75+\shift,-1.5-\shift) node[star, fill, inner sep = 2pt] (D10) {};
				\draw (3.75+\shift,-2.0-\shift) node[star, fill, inner sep = 2pt] (D10) {};
				\draw (3.75+\shift,-2.75-\shift) node[star, fill, inner sep = 2pt] (D10) {};
				\draw (4.5+\shift,-1.5-\shift) node[star, fill, inner sep = 2pt] (D11) {};
				\draw (4.5+\shift,-2.0-\shift) node[star, fill, inner sep = 2pt] (D11) {};
				\draw (4.5+\shift,-2.75-\shift) node[star, fill, inner sep = 2pt] (D11) {};
				
				\draw (2.6+\shift,-1.5-\shift) node[diamond, fill, inner sep = 2pt] (D1) {};
				\draw (2.6+\shift,-2.2-\shift) node[diamond, fill, inner sep = 2pt] (D2) {};
				\draw (2.6+\shift,-3.0-\shift) node[diamond, fill, inner sep = 2pt] (D3) {};
				\draw (3.0+\shift,-1.1-\shift) node[rectangle, fill, inner xsep = 2.5pt, inner ysep = 2.5pt] (D4) {};
				\draw (3.75+\shift,-1.1-\shift) node[rectangle, fill, inner xsep = 2.5pt, inner ysep = 2.5pt] (D5) {};
				\draw (4.5+\shift,-1.1-\shift) node[rectangle, fill, inner xsep = 2.5pt, inner ysep = 2.5pt] (D6) {};
		\end{tikzpicture}}
		\caption{Local spaces with artificial interfaces \label{fig:ai}}
	\end{center}
\end{figure}
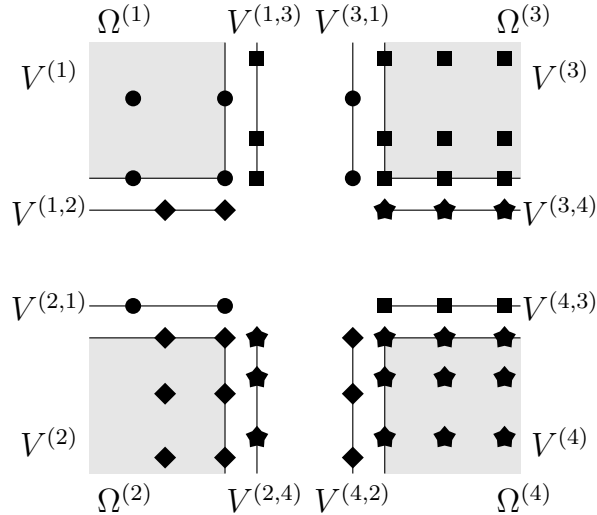

We define restrictions of the bilinear form $a_h(\cdot,\cdot)$ to the local spaces
$V_e^{(k)}$ by
\[
\begin{aligned}
	a_e^{(k)}(u_e^{(k)},v_e^{(k)}) := a^{(k)}(u_e^{(k)},v_e^{(k)}) + m^{(k)}(u_e^{(k)},v_e^{(k)}) + r^{(k)}(u_e^{(k)},v_e^{(k)}), 
\end{aligned}
\]
where, although with a slight abuse of notation,
\[
\begin{aligned}
	a^{(k)}(u_e^{(k)},v_e^{(k)}) &:= \int_{\Omega^{(k)}} \nabla u^{(k)} \cdot \nabla v^{(k)} \; \textrm{d}x, \\
	m^{(k)}(u_e^{(k)},v_e^{(k)}) &:= \sum_{\ell \in \mathcal{N}_\Gamma(k)} \int_{\Gamma^{(k,\ell)}} \frac{1}{2} 
	\left( 
	\frac{\partial u^{(k)}}{\partial n_k}(v^{(k,\ell)} - v^{(k)}) + 
	\frac{\partial v^{(k)}}{\partial n_k}(u^{(k,\ell)} - u^{(k)})
	\right) \; \textrm{d}s, \\
	r^{(k)}(u_e^{(k)},v_e^{(k)}) &:= \sum_{\ell \in \mathcal{N}_\Gamma(k)} \int_{\Gamma^{(k,\ell)}} \frac{\delta}{h_{k\ell}}   
	(u^{(k,\ell)} - u^{(k)})(v^{(k,\ell)} - v^{(k)}) \; \textrm{d}s.
\end{aligned}
\]
The localized dG-norm $\| \cdot \|_{d_e^{(k)}}$ is induced by the localized
scalar product
\[
d_e^{(k)}(u_e^{(k)}, v_e^{(k)})=a^{(k)}(u_e^{(k)}, v_e^{(k)}) + r^{(k)}(u_e^{(k)}, v_e^{(k)}).
\]
Analogously, we restrict the linear form $\langle f,\cdot\rangle$
to $V_e^{(k)}$ and define
\[
\langle f_e^{(k)},v_e^{(k)}\rangle
:=
\int_{\Omega^{(k)}} f v^{(k)} \mathrm d x.
\]

%{\color{red} The following problem is equivalent to \eqref{discreteVarProb}: find ($\underline{u}$, $\underline{\lambda}$)} such that 
%\[
%\begin{aligned}
%\begin{pmatrix}
%	A & B^\top \\
%	B & \\
%\end{pmatrix}
%\begin{pmatrix}
%	\underline{u} \\
%	\underline{\lambda} 
%\end{pmatrix}= 
%\begin{pmatrix}
%	\underline{f} \\
%	0
%\end{pmatrix}.
%\end{aligned}
%\]
%If a patch $\Omega^{(k)}$ does not contribute to the Dirichlet boundary the corresponding matrix $A^{(k)}$ is in general singular. So, $A$ is in general singular as well. To ensure that $A$ is invertible we introduce primal degrees of freedom. We want to construct a space $\widetilde{V}~\subset~V_e$ such that $A$ restricted to $\widetilde{V}$ is non-singular.
The next step is the introduction of the global discretization space
%
%For this purpose one can choose between different primal degrees of freedom to construct a space $\widetilde{V}$ such that $\widetilde{V} \cap \text{ker } A = \{ 0 \}$.  
%For an overview of different commonly used primal degrees of freedom for our model problem see e.g.~\cite{SchneckenleitnerTakacs:2020, Pechstein:2013a}. In this paper, we consider only vertex values as primal degrees of freedom, usually denoted as Algorithm~A.
%With this choice of primal degrees of freedom the space $\widetilde{V}$ is given by 
$
V_e := V_e^{(1)}\times\cdots\times V_e^{(K)}
$.
A function in this space is denoted by $v_e=(v_e^{(1)},\ldots,
v_e^{(K)})$. Here and in what follows, we use this notation, also
in combination with the notation~\eqref{def:representation}, to
refer to components of any function, like $u_e$, $\widehat u_e$ or
$\widehat v_e$.

%we define  \mt
%\begin{align*}
%\mt	\widetilde{V}:= & \left\{ v_e \in V_e: v^{(k)}(\mathbf{x}) = v^{(\ell,k)}(\mathbf{x}) \text{ for all common} \right. \\ & \left. \text{vertices $\mathbf{x}$ of $\Omega^{(k)}$ and $\Omega^{(\ell)}$, for every $\ell \in N_\Gamma(k)$} \right\}. \bb
%\end{align*}
%The space \mt $\widetilde{V}_\Delta = V_\Delta^{(1)}\times\cdots\times V_\Delta^{(K)}$ \bb
%%$\widetilde{V}_\Delta = \sum_{i = 1}^K \widetilde{V}_\Delta^{(k)}$ 
%is the space of functions that take the value $0$ on those common vertices, i.e.,
% \mt \begin{align*}
%\widetilde{V}_\Delta^{(k)} := & \left\{ v_e^{(k)} \in V_e^{(k)}: v^{(k)}(\mathbf{x}) = v^{(k,\ell)}(\mathbf{x}) = 0   \text{ for all common}
%\right. \\ & \left. \text{ vertices $\mathbf{x}$ of $\Omega^{(k)}$ and $\Omega^{(\ell)}$, for every $\ell \in N_\Gamma(k)$} \right\}. 
%\end{align*} \bb
%The primal space $\widetilde{V}_\Pi$ consists of functions that
%are $a_e^{(k)}$-orthogonal to $\widetilde{V}_\Delta$ in $\widetilde{V}$, i.e., 
%\[	
%	\widetilde{V}_\Pi := \{ u_e 
%	\in \widetilde{V}: a_e^{(k)}(u_e^{(k)},v_e^{(k)})=0
%	\mbox{ for all } v_e^{(k)} \in \widetilde{V}_\Delta^{(k)}
%	\mbox{ and } k=1,\ldots,K \}.
%\]
%Note that this choice guarantees $\widetilde{V} = \widetilde{V}_\Pi \oplus \widetilde{V}_\Delta$. 

Now, we set up the linear system to be solved.
Let $A^{(k)}:=[a_e^{(k)}(\phi_{e,i}^{(k)}, \phi_{e,j}^{(k)})]_{i,j=1}^{N_e^{(k)}}$ be the local
stiffness matrix and $\underline{f}_e^{(k)} := 
[\langle f_e^{(k)}, \phi_{e,i}^{(k)}\rangle]_{i=1}^{N_e^{(k)}}$ be the local load vector.
By collecting all the local contributions, we obtain the global stiffness matrix and the global load vector:
\[
A:=~ \mbox{diag}(A^{(1)}, \dots, A^{(K)}) \quad \mbox{and}\quad
\underline f:=((\underline{f}_e^{(1)})^\top, \dots, (\underline{f}_e^{(K)})^\top)^\top.
\]

Following the principle of dual-primal IETI methods, we introduce
primal constraints. We restrict
ourselves to vertex values only (for alternatives, cf.~e.g.~\cite{SchneckenleitnerTakacs:2020, Pechstein:2013a}). So,
for every corner $\mathbf{x}$ of $\Omega^{(k)}$ we enforce 
\[
v^{(k)}(\mathbf{x}) = v^{(\ell,k)}(\mathbf{x})
\]
for every $\ell \in \mathcal{N}_\Gamma(k)$, where we consider any neighboring patch $\Omega^{(\ell)}$ that shares the corner $\mathbf{x}$ with $\Omega^{(k)}$.

Moreover, we introduce a matrix $B$ that models the constraints
\begin{equation}\label{eq:B:constr}
	u^{(k)}|_{\Gamma^{(k,\ell)}} = u^{(\ell,k)}
	\quad\mbox{for}\quad k=1,\ldots,K
	\quad\mbox{and}\quad \ell\in\mathcal N_\Gamma(k)
\end{equation}
in the usual way, i.e., such that it 
consists of two non-zero entries per row, one with value $1$ and one
with value $-1$, cf. \cite{SchneckenleitnerTakacs:2020}
for a formal definition.
Since we take the vertex values as primal degrees of freedom, the
matrix $B$ does not enforce the constraint~\eqref{eq:B:constr} for the vertex-based
basis functions. The constraints are visualized in Figure~\ref{fig:ommiting}.

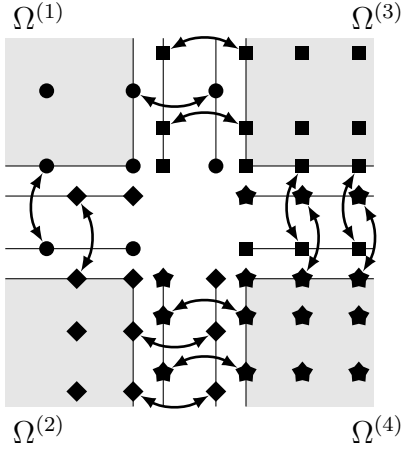
\begin{figure}
	\begin{center}
		\centering
		\begin{tikzpicture}
			\fill[gray!20] (-0.2,0) -- (1.5,0) -- (1.5,1.7) -- (-0.2,1.7);
			\fill[gray!20] (-0.2,-1.5) -- (1.5,-1.5) -- (1.5,-3.2) -- (-0.2,-3.2);
			\fill[gray!20] (4.7,0) -- (3.0,0) -- (3.0,1.7) -- (4.7,1.7);
			\fill[gray!20] (4.7,-1.5) -- (3.0,-1.5) -- (3.0,-3.2) -- (4.7,-3.2);
			
			\draw (-0.2,0) -- (1.5,0) -- (1.5,1.7) node at (0.25,2.0) {$\Omega^{(1)}$}; 
			\draw (-0.2,-1.5) -- (1.5,-1.5) -- (1.5,-3.2) node at (0.25,-3.5) {$\Omega^{(2)}$}; 
			\draw (4.7,0) -- (3.0,0) -- (3.0,1.7) node at (4.75,2.0) {$\Omega^{(3)}$}; 
			\draw (4.7,-1.5) -- (3.0,-1.5) -- (3.0,-3.2) node at (4.75,-3.5) {$\Omega^{(4)}$}; 
			
			\draw (-0.2,-0.4) -- (1.5,-0.4) {};
			\draw (1.9,0) -- (1.9,1.7) {};
			
			\draw (3.0,-0.4) -- (4.7,-0.4) {};
			\draw (2.6,0) -- (2.6,1.7) {};
			
			\draw (-0.2,-1.1) -- (1.5,-1.1) {};
			\draw (1.9,-1.5) -- (1.9,-3.2) {};
			
			\draw (3.0,-1.1) -- (4.7,-1.1) {};
			\draw (2.6,-1.5) -- (2.6,-3.2) {};
			
			\draw (1.5,0) node[circle, fill, inner sep = 2pt] (A7) {};
			\draw (1.5,1.0) node[circle, fill, inner sep = 2pt] (A9) {};
			\draw (0.35,0.0) node[circle, fill, inner sep = 2pt] (A11) {};
			\draw (0.35,1.0) node[circle, fill, inner sep = 2pt] {};
			
			\draw (1.9,0) node[rectangle, fill, inner xsep = 2.5pt, inner ysep = 2.5pt] (A1) {};
			\draw (1.9,0.5) node[rectangle, fill, inner xsep = 2.5pt, inner ysep = 2.5pt] (A2) {};
			\draw (1.9,1.5) node[rectangle, fill, inner xsep = 2.5pt, inner ysep = 2.5pt] (A3) {};
			\draw (0.75,-0.4) node[diamond, fill, inner sep = 2pt] (A5) {};
			\draw (1.5,-0.4) node[diamond, fill, inner sep = 2pt] (A6) {};
			
			\draw (3.0,0) node[rectangle, fill, inner xsep = 2.5pt, inner ysep = 2.5pt] (B7) {};
			\draw (3.0,0.5) node[rectangle, fill, inner xsep = 2.5pt, inner ysep = 2.5pt] (B8) {};
			\draw (3.0,1.5) node[rectangle, fill, inner xsep = 2.5pt, inner ysep = 2.5pt] (B9) {};
			\draw (3.75,0) node[rectangle, fill, inner xsep = 2.5pt, inner ysep = 2.5pt] (B10) {};
			\draw (4.5,0) node[rectangle, fill, inner xsep = 2.5pt, inner ysep = 2.5pt] (B11) {};
			\draw (3.75,0.5) node[rectangle, fill, inner xsep = 2.5pt, inner ysep = 2.5pt] {};
			\draw (3.75,1.5) node[rectangle, fill, inner xsep = 2.5pt, inner ysep = 2.5pt] {};
			\draw (4.5,0.5) node[rectangle, fill, inner xsep = 2.5pt, inner ysep = 2.5pt] {};
			\draw (4.5,1.5) node[rectangle, fill, inner xsep = 2.5pt, inner ysep = 2.5pt] {};
			
			\draw (2.6,0) node[circle, fill, inner sep = 2pt] (B1) {};
			\draw (2.6,1.0) node[circle, fill, inner sep = 2pt] (B3) {};
			\draw (3.0,-0.4) node[star, fill, inner sep = 2pt] (B4) {};
			\draw (3.75,-0.4) node[star, fill, inner sep = 2pt] (B5) {};
			\draw (4.5,-0.4) node[star, fill, inner sep = 2pt] (B6) {};
			
			\draw (1.5,-1.5) node[diamond, fill, inner sep = 2pt] (C7) {};
			\draw (1.5,-2.2) node[diamond, fill, inner sep = 2pt] (C8) {};
			\draw (1.5,-3.0) node[diamond, fill, inner sep = 2pt] (C9) {};
			\draw (0.75,-1.5) node[diamond, fill, inner sep = 2pt] (C11) {};
			\draw (0.75,-2.2) node[diamond, fill, inner sep = 2pt]  {};
			\draw (0.75,-3.0) node[diamond, fill, inner sep = 2pt] {};
			
			\draw (1.9,-1.5) node[star, fill, inner sep = 2pt] (C1) {};
			\draw (1.9,-2.0) node[star, fill, inner sep = 2pt] (C2) {};
			\draw (1.9,-2.75) node[star, fill, inner sep = 2pt] (C3) {};
			\draw (0.35,-1.1) node[circle, fill, inner sep = 2pt] (C5) {};
			\draw (1.5,-1.1) node[circle, fill, inner sep = 2pt] (C6) {};
			
			\draw (3.0,-1.5) node[star, fill, inner sep = 2pt] (D7) {};
			\draw (3.0,-2.0) node[star, fill, inner sep = 2pt] (D8) {};
			\draw (3.0,-2.75) node[star, fill, inner sep = 2pt] (D9) {};
			\draw (3.75,-1.5) node[star, fill, inner sep = 2pt] (D10) {};
			\draw (4.5,-1.5) node[star, fill, inner sep = 2pt] (D11) {};
			\draw (3.75,-2.0) node[star, fill, inner sep = 2pt] {};
			\draw (3.75,-2.75) node[star, fill, inner sep = 2pt] {};
			\draw (4.5,-2.0) node[star, fill, inner sep = 2pt] {};
			\draw (4.5,-2.75) node[star, fill, inner sep = 2pt] {};
			
			\draw (2.6,-1.5) node[diamond, fill, inner sep = 2pt] (D1) {};
			\draw (2.6,-2.2) node[diamond, fill, inner sep = 2pt] (D2) {};
			\draw (2.6,-3.0) node[diamond, fill, inner sep = 2pt] (D3) {};
			\draw (3.0,-1.1) node[shape=rectangle, fill, inner xsep = 2.5pt, inner ysep = 2.5pt] (D4) {};
			\draw (3.75,-1.1) node[rectangle, fill, inner xsep = 2.5pt, inner ysep = 2.5pt] (D5) {};
			\draw (4.5,-1.1) node[rectangle, fill, inner xsep = 2.5pt, inner ysep = 2.5pt] (D6) {};
			
			\draw[<->, line width = 1pt, latex-latex, bend left]
			(A2) edge (B8) (A3) edge (B9)
			(A5) edge (C11)
			(B6) edge (D11) (B5) edge (D10)
			(C2) edge (D8) (C3) edge (D9);
			
			\draw[<->, line width = 1pt, latex-latex, bend right]
			(A9) edge (B3)
			(A11) edge (C5)
			(B11) edge (D6) (B10) edge (D5)
			(C8) edge (D2) (C9) edge (D3);
		\end{tikzpicture}
		\captionof{figure}{Constraints that are modelled by the matrix B}
		\label{fig:ommiting}
	\end{center}
\end{figure}

We assume the following ordering of the basis functions in $\Phi_e^{(k)}$. First, there are the basis functions that vanish at the boundary of the patch. We denote the corresponding degrees of freedom with a subindex $\mathrm I$.
Then, there are those basis functions on the interfaces and artificial interfaces that vanish on the corners. We denote the corresponding degrees of freedom with a subindex $\mathrm B$. Finally, there are the basis functions associated to the corners. We denote the corresponding degrees of freedom with a subindex $\mathrm C$. We use the subindex
$\Gamma$ to denote the combination of $\mathrm B$ and $\mathrm C$ and the subindex $\Delta$ to denote the combination of $\mathrm I$ and $\mathrm B$.
Following this splittings, the matrices $A^{(k)}$ and $B^{(k)}$ 
are decomposed as follows:
\begin{equation}\label{eq:subdiv}
	\begin{aligned}
		A^{(k)} &= 
		\begin{pmatrix}
			A_{\mathrm{II}}^{(k)} & A_{\mathrm{IB}}^{(k)}& A_{\mathrm{IC}}^{(k)} \\
			A_{\mathrm{BI}}^{(k)} & A_{\mathrm{BB}}^{(k)}& A_{\mathrm{BC}}^{(k)} \\
			A_{\mathrm{CI}}^{(k)} & A_{\mathrm{CB}}^{(k)}& A_{\mathrm{CC}}^{(k)} \\
		\end{pmatrix}
		= 
		\begin{pmatrix}
			A_{\mathrm{II}}^{(k)} & A_{\mathrm{I}\Gamma}^{(k)} \\
			A_{\Gamma \mathrm{I}}^{(k)} & A_{\Gamma \Gamma}^{(k)}
		\end{pmatrix}
		=
		\begin{pmatrix}
			A_{\Delta\Delta}^{(k)} & A_{\Delta\mathrm{C}}^{(k)} \\
			A_{\mathrm{C}\Delta}^{(k)} & A_{\mathrm{CC}}^{(k)} \\
		\end{pmatrix}, \\
		B^{(k)} &= \begin{pmatrix} B^{(k)}_{\mathrm I} & B^{(k)}_{\mathrm B} & B^{(k)}_{\mathrm C}\end{pmatrix} = \begin{pmatrix} B^{(k)}_{\mathrm I} & B^{(k)}_\Gamma\end{pmatrix} = \begin{pmatrix} B^{(k)}_{\mathrm \Delta} & B^{(k)}_{\mathrm C}\end{pmatrix} = \begin{pmatrix} 0 & B_{\mathrm{B}}^{(k)} & 0 \end{pmatrix}.
	\end{aligned}
\end{equation}
Furthermore, let us define 
$A_{\Delta\Delta} := \mbox{diag}(A_{\Delta\Delta}^{(1)},\cdots, A_{\Delta\Delta}^{(K)})$ and
$B_\Delta := (B_\Delta^{(1)}, \cdots,B_\Delta^{(K)})$.

The next step is the introduction of an $A$-orthogonal and nodal basis representation matrix $\Psi$ in order to obtain the primal problem. We define for each patch $\Omega^{(k)}$ an $A^{(k)}$-orthogonal matrix by 
\begin{equation}\label{def:local primal basis}
	\Psi^{(k)}  := 
	\begin{pmatrix}
		\Psi_\Delta^{(k)} \\ \Psi_{\mathrm{C}}^{(k)}
	\end{pmatrix}
	= \begin{pmatrix}
		- (A_{\Delta\Delta}^{(k)})^{-1} A_{\Delta \mathrm C}^{(k)}
		\\ I
	\end{pmatrix}.
\end{equation}
Let $G^{(k)}$ denote the canonical global to local mapping matrix for the primal degrees of freedom. Then the $A$-orthogonal basis is given by 
\begin{equation}\label{def:global primal basis}
	\Psi:= 
	\begin{pmatrix}
		\Psi^{(1)} G^{(1)} \\
		\vdots \\
		\Psi^{(K)} G^{(K)}
	\end{pmatrix}.
\end{equation}

% Note that $n_\Pi^{(k)} = 12$ whenever the corresponding patch $\Omega^{(k)}$ does not touch the boundary $\partial \Omega$.
%Furthermore, we define $\widehat\Psi:=\mbox{diag}(\Psi^{(1)},\cdots,\Psi^{(K)})$.

%Let $C^{(k)}$ be a full rank matrix representing the space $\widetilde{V}_\Delta^{(k)}$, i.e., such that 
%\[	C^{(k)} \underline{v}_e^{(k)} = 0 \Leftrightarrow v_e^{(k)} \in \widetilde{V}_\Delta^{(k)}\]
%for all $v_e^{(k)} \in V_e^{(k)}$ with vector representation $\underline{v}_e^{(k)}$. We require that he ordering of the rows corresponds to the ordering in the columns in $\Psi^{(k)}$, i.e., we have $C^{(k)} \Psi^{(k)}=I$.

The following problem is equivalent to the
original problem \eqref{discreteVarProb}: Find
$(\underline u_\Delta^\top, \underline u_\Pi^\top, \underline \lambda^\top)^\top$ such that
\begin{align}\label{eq:star}
	\underbrace{
		\begin{pmatrix}
			A_{\Delta\Delta}  &	                     &  B_\Delta^\top           \\
			&   \Psi^\top A \Psi   & \Psi^\top B^\top         \\
			B_\Delta          &  B \Psi              & 0
		\end{pmatrix}	
	}_{\displaystyle \mathcal A :=}
	\begin{pmatrix}
		\underline{u}_\Delta  \\
		\underline{u}_\Pi  \\
		\underline{\lambda}  \\
	\end{pmatrix}
	=
	\begin{pmatrix}
		\underline{f}_\Delta  \\
		\underline{f}_\Pi  \\
		0  \\
	\end{pmatrix}
	,
\end{align}
where $\underline{f}_\Pi := \Psi^\top \underline{f}$, cf.~\cite{MandelDohrmannTezaur:2005a}.

In the following sections, we discuss how the system~\eqref{eq:star}
is to be solved. Having the solution, the overall
solution can be then computed by 
\begin{equation}\label{eq:veryfinal}
	\underline{u} = \begin{pmatrix} 
		\underline{u}_\Delta \\ 0  \end{pmatrix}  + \Psi \underline{u}_\Pi.
\end{equation}

\subsection{The local preconditioners}
\label{subsec:local preconditioners}

In this subsection, we introduce the preconditioners for $A_{\Delta\Delta}^{(k)}$, called $P^{(k)}$.
The main idea is to use an additive Schwarz splitting of the local extended IgA space on the parameter domain into two subspaces, i.e., 
\begin{align}
	\label{eq:space splitting}
	\widehat{V}_e^{(k)}:=
	\{
		({v}^{(k)} \circ G_k, ({v}^{(k,\ell)} \circ G_\ell)_{\ell \in \mathcal{N}_\Gamma(k)}) \, : \, v^{(k)} \in V^{(k)}, v^{(k,\ell)} \in V^{(k,\ell)}
	\} 
	= \widehat{V}_1^{(k)} \oplus \widehat{V}_2^{(k)},
\end{align} 
where %the function spaces $\widehat{V}_1^{(k)}, \widehat{V}_2^{(k)} \subset \widehat{V}_e^{(k)}$ are defined as
\begin{align*}
	\widehat{V}_1^{(k)} := \left\{\left(\widehat{u}^{(k)}, \left(\pi^{(k,\ell)}\left(\widehat{u}^{(k)}|_{\Gamma^{(k,\ell)}}\right) \right)_{\ell \in \mathcal{N}_\Gamma(k)} \right): \widehat{u}^{(k)}\in \widehat{V}^{(k)}  \right\},
\end{align*}
$\pi^{(k,\ell)}$ is the $L^2$-orthogonal projection from $\widehat V^{(\ell,k)}$ into $ \widehat{V}^{(k, \ell)}$ and 
\begin{align*}
	\widehat{V}_2^{(k)} := \left\{\left(0, (\widehat{u}^{(k, \ell)})_{\ell \in \mathcal{N}_\Gamma(k)}\right): \widehat{u}^{(k,\ell)} \in \widehat{V}^{(k,\ell)}\right\}.
\end{align*}
In both cases,
$\widehat{V}^{(k,\ell)} := \left\{ u^{(k,\ell)} \circ G_{k}|_{\widehat{\Gamma}^{(k,\ell)}} : u^{(k,\ell)} \in V^{(k,\ell)} \right\}  $
is the trace space from the neighboring patch, mapped to the
corresponding edge on the parameter domain. 
Since it will be used in the following, we fix on $\widehat{V}^{(k,\ell)}$ the basis $\widehat{\Phi}^{(k,\ell)}$ induced by the basis $\Phi^{(k,\ell)}$ of $V^{(k,\ell)}$.
%
%Our idea is to use the $h$ and $p$ robust fast diagonalization method (FD), cf. \cite{SangalliTani:2016} in $\widehat{V}_1^{(k)}$ and then for the remaining part on the artificial interfaces we do a direct solve.
%\begin{align*}
%	\frac{\delta}{\widehat{h}_{k\ell}} \int_{\widehat{\Gamma}^{(k,\ell)}}^{} \widehat{u}^{(k,\ell)} \widehat{v}^{(k,\ell)} \; \mathrm{d}s = \int_{\widehat{\Gamma}^{(k,\ell)}} \widehat{r}^{(k,\ell)} \widehat{v}^{(k,\ell)} \; \mathrm{d}s \qquad \forall \widehat{v}^{(k,\ell)} \in \widehat{V}^{(k,\ell)}
%\end{align*}
%for every $\ell \in \mathcal{N}_\Gamma(k)$.
We choose the local bilinear forms in the spaces $\widehat{V}_1^{(k)}$ and $\widehat{V}_2^{(k)}$ as follows:
\begin{align}\label{eq:preconditioner1}
	\widetilde{d}_1^{(k)}(\widehat{u}_{1,e}^{(k)}, \widehat{v}_{1,e}^{(k)}) := \int_{\widehat{\Omega}^{}}^{} \nabla \widehat{u}_1^{(k)} \cdot \nabla \widehat{v}_1^{(k)} \; \text{d}x + 
	\alpha
	\int_{\widehat{\Omega}} \widehat{u}_1^{(k)}\widehat{v}_1^{(k)} \; \mathrm{d}x 
	%\int_{\widehat{\Omega}} \widehat{v}_1^{(k)} \; \text{d}x,
	\quad\mbox{for all}\quad \widehat{u}_{1,e}^{(k)},\widehat{v}_{1,e}^{(k)} \in \widehat{V}_1^{(k)}
\end{align}
and 
\begin{align}\label{eq:preconditioner2}
	\widetilde{d}_2^{(k)}(\widehat{u}_{2,e}^{(k)}, \widehat{v}_{2,e}^{(k)}) := \sum_{\ell \in \mathcal{N}_\Gamma(k)}^{}\frac{\delta}{\widehat{h}_{k\ell}}\int_{\widehat{\Gamma}^{(k,\ell)}}^{} \widehat{u}_2^{(k,\ell)} \widehat{v}_2^{(k,\ell)} \; \text{d}s, \quad\mbox{for all}\quad \widehat{u}_{2,e}^{(k)},\widehat{v}_{2,e}^{(k)} \in \widehat{V}_2^{(k)},
\end{align}
where $\alpha := 0$ if $|\partial \Omega^{(k)} \cap \partial \Omega| > 0$ and $\alpha = %\alpha_0 > 0$ 
1$ otherwise.

On $\widehat{V}_1^{(k)}$, we consider the basis induced by the basis $\widehat{\Phi}^{(k)}$ of $\widehat{V}^{(k)}$, and on $\widehat{V}_2^{(k)}$ the basis induced by the basis $ \bigcup_{\ell \in \mathcal{N}_\Gamma(k)} \widehat{\Phi}^{(k,\ell)}$ of $\prod_{\ell \in \mathcal{N}_\Gamma(k)} \widehat{V}^{(k,\ell)}$.
Then, the above bilinear forms are represented by the local matrices
$\widetilde{D}_1^{(k)}$ and $\widetilde{D}_2^{(k)}$, respectively.
%The relations 
%\[
%	\langle \tilde{A}_i^{(k)} \underline{\widehat{u}}_{e,i}^{(k)}, \underline{\widehat{v}}_{e,i}^{(k)} \rangle = \tilde{a}_i^{(k)}(\widehat{u}_{e,i}^{(k)}, \widehat{v}_{e,i}^{(k)}), \qquad \mbox{for all} \quad \widehat{u}_{e,i}^{(k)} \in \widehat{V}_i^{(k)}, \quad
%	i \in \{1,2\}.
%\]
%yield the matrices $\tilde{A}_1^{(k)}$ and $\tilde{A}_2^{(k)}$.
The matrix $\widetilde{D}_1^{(k)}$ 
is (if the degrees of freedom are ordered in a standard lexicographic ordering) the sum of Kronecker-product matrices. Specifically, it has the tensor structure of a Sylvester equation.
%has (if the degrees of freedom are ordered in a standard lexicographic ordering) a Kronecker product structure.
So, we can use the fast diagonalization (FD) method, cf. \cite{SangalliTani:2016}, to realize 
$\left(\widetilde{D}^{(k)}_{1}\right)^{-1}$. For the application of
$\left(\widetilde{D}^{(k)}_{1,\Delta\Delta}\right)^{-1}$, we first introduce the subdivision
\[
\widetilde{D}^{(k)}_{1} = 
\begin{pmatrix}
	\widetilde{D}^{(k)}_{1,\Delta\Delta} &  
	\widetilde{D}^{(k)}_{1,\Delta\mathrm C} \\ 
	\widetilde{D}^{(k)}_{1,\mathrm C\Delta} & 
	\widetilde{D}^{(k)}_{1,\mathrm C\mathrm C} 
\end{pmatrix}
=
\begin{pmatrix}
	\widetilde{D}^{(k)}_{1,\Delta\Delta} \\& 
	\widetilde{D}^{(k)}_{1,\mathrm C\mathrm C} 
\end{pmatrix}
+
\underbrace{\begin{pmatrix}
		\widetilde{D}^{(k)}_{1,\Delta\mathrm C} & 0 \\
		0 & I_{\mathrm C\mathrm C}
\end{pmatrix}}_{\displaystyle U^{(k)} := }
\underbrace{\begin{pmatrix}
		0 & I_{\mathrm C\mathrm C} \\
		\widetilde{D}^{(k)}_{1,\mathrm C\Delta} & 0
\end{pmatrix}}_{\displaystyle (V^{(k)})^\top := },
\]
where $I_{\mathrm C\mathrm C}$ is the identity matrix on the dofs with subindex $\mathrm C$.
Using the Sherman-Morrison-Woodbury formula, see~\cite{GolubLoan:2013}, we obtain
\begin{align*}
	&(\widetilde{D}^{(k)}_{1,\Delta\Delta})^{-1}
	=
	\begin{pmatrix}
		I_{\Delta\Delta} & 0
	\end{pmatrix}
	\begin{pmatrix}
		\widetilde{D}^{(k)}_{1,\Delta\Delta} \\& 
		\widetilde{D}^{(k)}_{1,\mathrm C\mathrm C} 
	\end{pmatrix}^{-1}
	\begin{pmatrix}
		I_{\Delta\Delta} \\ 0
	\end{pmatrix}
	\\&=
	\begin{pmatrix}
		I_{\Delta\Delta} & 0
	\end{pmatrix}
	\big(\widetilde{D}^{(k)}_{1}- U^{(k)}(V^{(k)})^\top\big)^{-1}
	\begin{pmatrix}
		I_{\Delta\Delta} \\ 0
	\end{pmatrix}
	\\
	&= 
	\begin{pmatrix}
		I_{\Delta\Delta} & 0
	\end{pmatrix}
	\Big(
	I + 
	(\widetilde{D}^{(k)}_1)^{-1}
	U^{(k)} \big(I - (V^{(k)})^\top (\widetilde{D}^{(k)}_1)^{-1}U^{(k)}  \big)^{-1} (V^{(k)})^\top 
	\Big)(\widetilde{D}^{(k)}_1)^{-1}
	\begin{pmatrix}
		I_{\Delta\Delta} \\ 0
	\end{pmatrix},
\end{align*}
where  $I_{\Delta\Delta}$ is the identity matrix on the dofs with subindex $\Delta$.

The application of $(\widetilde{D}_{2,\Delta \Delta}^{(k)})^{-1}$ (i.e., the matrix $(\widetilde{D}_{2}^{(k)})^{-1}$ without the corners) to a vector
is realized using a direct solver. This is feasible since these matrices
are small since they only live  on the artificial interfaces.
%\begin{remark}
%	In the framework introduced in \cite{ToselliWidlund:2005a}, the index of the local spaces starts with $0$. But the space with index $0$ is usually refered to a global space. So, we start with index $1$.
%\end{remark}

Then, we define
\[
%(\widetilde{D}^{(k)})^{-1} := 
P^{(k)} := 
\sum_{i = 1}^2 E_{i,\Delta \Delta}^{(k)} \left( \widetilde{D}_{i,\Delta \Delta}^{(k)} \right)^{-1} (E_{i,\Delta \Delta}^{(k)})^\top,
\quad
\mbox{where}
\quad
E_{i,\Delta \Delta}^{(k)}:= 
\begin{pmatrix}
	I_{\Delta\Delta} & 0
\end{pmatrix}
E^{(k)}_i 
\begin{pmatrix}
	I_{\Delta\Delta} \\ 0
\end{pmatrix}
\]
and $E_i^{(k)}$ is the matrix representation of the canonical embedding $\widehat{V}_i^{(k)} \rightarrow \widehat{V}_e^{(k)}$. 

%Note that $\widetilde{D}^{(k)}$ is defined on the whole space $\widehat{V}_e^{(k)}$
%and that the corners (primal degrees of freedom) are not
%eliminated. This is necessary
%since $\widetilde{D}_1^{(k)}$ would not have the required tensor structure if the corners would have been eliminated. Since the preconditioner should live on the the space with corners eliminated, we define
%\begin{equation}\label{eq:P:def}
%		P^{(k)} :=   Q^{(k)} (\widetilde{D}^{(k)})^{-1} (Q^{(k)})^\top
%		=
%		Q^{(k)}\left(
%		\sum_{i = 1}^2 E_i^{(k)} \left( \widetilde{D}_i^{(k)} \right)^{-1} (E_i^{(k)})^\top\right)(Q^{(k)})^\top		,
%\end{equation}
%where, recalling \eqref{def:local primal basis}, 
%\begin{equation}\label{eq:Q:def}
%	Q^{(k)}
%	:=
%	\begin{pmatrix}
%			I &  (A_{\Delta\Delta}^{(k)})^{-1} A_{\Delta\mathrm{C}}^{(k)}
%	\end{pmatrix}
%	=
%	\begin{pmatrix}
%			I &  \; - \Psi_\Delta^{(k)}
%	\end{pmatrix}
%\end{equation}
%is the rectangular matrix that represents the $A^{(k)}$-orthogonal projection from the space with zero corner values, i.e., the space of functions $v_e^{(k)} \in V_e^{(k)}$ such that $v^{(k)}(\mathbf{x}) = v^{(\ell,k)}(\mathbf{x})$ for every corner $\mathbf{x}$ of $\Omega^{(k)}$, 
%$\widetilde{V}_\Delta^{(k)}$ 
%into $V_e^{(k)}$. 

\subsection{The inexact scaled Dirichlet preconditioner}

For the last block of the overall preconditioner, we set up
an inexact variant of the scaled Dirichlet preconditioner.  
We introduce the bilinear form on $\widehat{V}_e^{(k)}$
\[
\widehat{d}_e^{(k)} (\widehat{u}_e^{(k)},\widehat{v}_e^{(k)})
:= \int_{\widehat \Omega} \nabla \widehat{u}^{(k)} \cdot \nabla \widehat{v}^{(k)} \mathrm dx
+ \sum_{\ell \in \mathcal{N}_\Gamma(k)} \int_{\widehat\Gamma^{(k,\ell)}}
\frac{\delta}{\widehat h_{k\ell}}   
(\widehat u^{(k,\ell)} - \widehat u^{(k)})(\widehat v^{(k,\ell)} - \widehat v^{(k)}) \; \textrm{d}s
\]
and let $\widehat D^{(k)}$ be its matrix representation, which has the block structure 
\begin{equation}
	\begin{aligned}
		\widehat D^{(k)} = 
		\begin{pmatrix}
			\widehat{D}_{\mathrm{II}}^{(k)} & \widehat{D}_{\mathrm{I}\Gamma}^{(k)} \\
			\widehat{D}_{\Gamma \mathrm{I}}^{(k)} & \widehat{D}_{\Gamma \Gamma}^{(k)}
		\end{pmatrix}.
	\end{aligned}
\end{equation}
The inexact version of the the scaled Dirichlet preconditioner is
given by
\begin{equation}\label{eq:14a}
	\widehat{M}_{\mathrm{sD}} := B_\Gamma \mathcal{D}^{-1} \widehat{S}_D \mathcal{D}^{-1} B_\Gamma^\top,
\end{equation}  
where
\begin{equation}\label{eq:14b}
	\widehat{S}_D = \mbox{diag}(\widehat{S}^{(1)}_D, \dots, \widehat{S}_D^{(K)})
	\qquad\mbox{and}\qquad
	\widehat S_D^{(k)} = \widehat D_{\Gamma \Gamma}^{(k)} - \widehat D_{\Gamma \mathrm{I}}^{(k)} (\widehat D_{\mathrm{II}}^{(k)})^{-1} \widehat D_{\mathrm{I} \Gamma}^{(k)}.
\end{equation}  
The diagonal matrix $\mathcal{D}$
% \in \mathbb{R}^{N_\Gamma \times N_\Gamma}$, where $N_\Gamma := \sum_{k=1}^K N_\Gamma^{(k)}$ and $N_\Gamma^{(k)}$ denotes the degrees of freedom on the interfaces plus those on the artificial interfaces,
is set up based on the principle of multiplicity scaling. The entries $d_{i,i}$ of $\mathcal{D}$ are one plus the number of Lagrange multipliers acting on the corresponding degree of freedom. For more details on the scaled Dirichlet preconditioner and the setup of the Schur complements, we refer to~\cite{DryjaGalvis:2013}. 
Similarly as for $(\widetilde{D}_{1,\Delta \Delta}^{(k)})^{-1}$, the application of $(\widehat D_{\mathrm{II}}^{(k)})^{-1}$ can be realized using the FD method.

\subsection{The global preconditioner}
\label{subsec:global preconditioner}

The linear system~\eqref{eq:star} is solved with MINRES and a
block-diagonal preconditioner. The first blocks are
$P := \mbox{diag}(P^{(1)}, \dots, P^{(K)})$ as preconditioner
for $A_{\Delta\Delta}$ and a direct exact solver as
preconditioner for the primal system~$\Psi^\top A \Psi$. The last block is
the inexact version of the scaled Dirichlet preconditioner.
Concluding, the overall preconditioner~$\mathcal{P}$ is given
by
\[
\mathcal{P}:=
\left(
\begin{array}{ccc}
	P     \\
	& (\Psi^\top A \Psi)^{-1} \\
	&& \widehat M_{\mathrm{sD}}
\end{array}	
\right).
\]

\subsection{The IETI-DP algorithm}

In the following, we present the whole algorithm that allows to solve \eqref{eq:star} with 
a MINRES 
solver and the preconditioner $\mathcal{P}$.  
\begin{enumerate}
	
	\item Compute the LU factorization of $\widetilde D_{2,\Delta\Delta}^{(k)}$, as well as the eigendecompositions of the univariate Kronecker factors of $\widetilde D_1^{(k)}$ and $\widehat{D}_{\mathrm{II}}^{(k)}$, as needed for the Fast Diagonalization (FD) method (see \cite{SangalliTani:2016} for details).
	\item Compute $\Psi$ to set up the coarse grid problem. By~\eqref{def:local primal basis}, we have 
	\[
	\Psi^{(k)}= 
	\begin{pmatrix}
		-(A_{\Delta \Delta}^{(k)})^{-1} A_{\Delta \mathrm{C}}^{(k)} \\
		I 
	\end{pmatrix}.
	\]
	%{\color{red} TODO: Welche Schritte zur Vorbereitung der Sherman-Morrison-Woodbury-Formel?]}
	%Let $\underline{e}_h^{(k)} := (1,\dots, 1)^\top$ and $\underline{g}^{(k)} := M^{(k)} \underline{e}_h^{(k)}$ if $|\partial \Omega^{(k)} \cap \partial \Omega| > 0$ and $\underline{g}^{(k)} \equiv 0$ otherwise, where $M^{(k)}$ is 
	%the mass matrix, obtained by discretizing the 
	%$L^2$-scalar product $(u_e^{(k)},v_e^{(k)})_{L^2(\Omega^{(k)})}
	%=\int_{\Omega^{(k)}} u^{(k)} v^{(k)} \; \mathrm dx$. For each $k$, we solve the system 
	%	\begin{align}\label{eq:basis computation}
	%		   (A^{(k)} + \underline{g}^{(k)} (\underline{g}^{(k)})^\top) R^{(k)} = 
	%		\begin{pmatrix}
	%		   0 \\ I
	%		\end{pmatrix}, 
	%	\end{align}
	%	where $R^{(k)} := ((R_\Delta^{(k)})^\top, (R_{\mathrm{C}}^{(k)})^\top)^\top$. The problem is solved with a preconditioned conjugate gradient (PCG) method 
	%	using the preconditioner
	%	$  (\widetilde{D}^{(k)})^{-1}  $.
	For each $k$, we solve the system 
	\begin{align}\label{eq:basis computation}
		A_{\Delta \Delta}^{(k)} \Psi^{(k)}_\Delta= 
		%\begin{pmatrix}
		-A^{(k)}_{\Delta \mathrm{C}}
		%\end{pmatrix}.
	\end{align}
	with a preconditioned conjugate gradient (PCG) method 
	using the preconditioner ${P}^{(k)}$. For the setup of $P^{(k)}$ the matrices $U^{(k)}$ and $V^{(k)}$ as well as $I_{\Delta\Delta}$ are computed in advance for the patch $k$. One application of $P^{(k)}$ requires matrix-vector products and the applications of $(\widetilde{D}_1^{(k)})^{-1}$ and of the dense but small matrix $\big(I - (V^{(k)})^\top (\widetilde{D}^{(k)}_1)^{-1}U^{(k)} \big)^{-1} (V^{(k)})^\top \Big)^{-1}$. The former application is realized using FD and the latter one is realized with a LU solver. 
	The linear system~\eqref{eq:basis computation} is solved up to a prescribed accuracy of $\varepsilon_C$ for
	the $\ell_2$-norm of the residual.
	\item Compute $\underline{f}_\Pi = \Psi^\top \underline{f}$ and the primal matrix $A_\Psi:=\Psi^\top A \Psi$.
	%\end{enumerate}
	%\item Compute the primal matrix $A_\Psi:=\Psi^\top A \Psi$.
	\item Solve the system \eqref{eq:star} with MINRES.
	%or GMRES. 
	Each iteration of the solver requires the execution of the following steps:
	\begin{enumerate}
		\item Compute the residual $\underline{r} = (\underline{r}_{\Delta}^\top, \underline{r}_\Pi^\top, \underline{q}^\top)^\top$, which
		%. The application of the matrix $\mathcal{A}$ requires
		only requires matrix-vector multiplications.
		\item Compute $\underline w_\Delta := P \underline r_\Delta$.
		The application of the embeddings $ E_{i,\Delta \Delta}^{(k)}$ needs matrix-vector multiplications and, in addition, the embedding from the space $\widehat{V}_1^{(k)}$ requires the solution of 1D mass problems which are solved with a sparse direct Cholesky solver. 
		\item Compute $\underline w_\Pi:=A_\Psi^{-1}\underline{r}_\Pi$
		using a sparse direct Cholesky solver.
		\item Apply the inexact scaled Dirichlet preconditioner, i.e.,
		compute $\underline{\zeta} :=
		\widehat{M}_\mathrm{sD}\underline{q}$ as defined
		in~\eqref{eq:14a} and \eqref{eq:14b}.
		This involves matrix-vector multiplications and the application
		of $(\widehat{D}_{\mathrm{II}}^{(k)})^{-1}$. The latter is
		realized using the FD method.		
		%We define
		%\[
		%	\widehat{D}_0^{(k)}:=[\widehat{d}_e^{(k)}((\widehat{\phi}_i, 0)^{(k)}, (\widehat{\phi}_j,0)^{(k)})]_{i,j=1}^{N^{(k)}}
		%\]
		%with basis functions $\widehat{\phi}_i$ that vanish on the boundary $\partial \Omega^{(k)}$. 
		%The action of  $(\widehat{D}_{\mathrm{II}}^{(k)})^{-1}$ is replaced by $\gamma$ steps of the Richardson iteration 
		%		\begin{equation}\label{eq:Richardson Aii}
		%		\underline{\breve{\zeta}}^{(i+1)} = \underline{\breve{\zeta}}^{(i)} + 
		%		\frac{1}{\Lambda ((\widehat{D}_{\mathrm{II}}^{(k)})^{-1} \widehat{D}_{\mathrm{II}}^{(k)})}
		%		(\widehat{D}_{\mathrm{II}}^{(k)})^{-1} \left( \underline{{q}}_{\mathrm{I} \Gamma}^{(k)} - \widehat{D}_{\mathrm{II}}^{(k)} \underline{\breve{\zeta}}^{(i)}  \right)
		%		\end{equation}
		%		where $\underline{\breve{\zeta}}^{(0)}$ for each $k=1,\dots, K$. As before the largest eigenvalue $\Lambda(\cdot)$ is estimated by $10$ steps of the power iteration method.
		%		We start the iterations on each patch with $\underline{\breve{w}}_\Pi^{(0)} = 0$ and the largest eigenvalue $\Lambda(\cdot)$ is again estimated by 10 steps of the power iteration method.
		%		{\color{red}[This is not what we describe in the remainder of this paper.]}		
	\end{enumerate} 
	The update direction for the MINRES 
	solver is 
	$
	\left( \underline{w}_\Delta^{\top}, \underline{w}_\Pi^{\top}, \underline{\zeta}^{\top} \right)^\top.
	$
	
	\item The final solution is obtained by~\eqref{eq:veryfinal}.
\end{enumerate}

Our numerical experience motivates the heuristic choice $\varepsilon_C = \frac{1}{100} \varepsilon $  as tolerance for solving systems \eqref{eq:basis computation}, where $\varepsilon$ is the prescribed tolerance for the iterative solution of \eqref{eq:star}. Indeed, we observed for larger values $\varepsilon_C$, the overall method might
not converge.

\begin{remark}
	Note that the primal basis can also be computed by solving $K$ indefinite linear systems involving the matrix $A^{(k)}$, see e.g., \cite{SchneckenleitnerTakacs:2020}. Removing the degrees of freedom corresponding to the corners of the patches allows us to compute the primal basis with PCG, because the matrix $A_\mathrm{\Delta \Delta}$ is symmetric and positive definite. 
\end{remark}

\section{Convergence analysis}
\label{sec:4}
This section is devoted to the convergence analysis for both the MINRES 
solver that is used for the solution of~\eqref{eq:star} and the PCG solver for  the system~\eqref{eq:basis computation}, see Theorem~\ref{thrm:fin}.

First, we introduce some useful notation.
If there is a constant $c>0$ that only depends on the spline degree~$p$, the choice of $\alpha$ in \eqref{eq:preconditioner1}  
and on the constants from Assumptions~\ref{ass:non-singularity} and \ref{ass:quasi-uniformity} such that $a \leq cb$, we write $a \lesssim b$. If $a \lesssim b \lesssim a$, we write $a \eqsim b$.
For two square matrices $A,B \in \mathbb{R}^{n\times n}, n \in \mathbb{N}$ we write $A \lesssim B$ if and only if 
$\underline{v}^\top A \underline{v} \lesssim \underline{v}^\top B \underline{v}$ holds for all $\underline{v}\in \mathbb{R}^n$.
We write $A \eqsim B$ if and only if $A \lesssim B \lesssim A$.

For any symmetric positive (semi-)definite matrix 
$X \in \mathbb{R}^{n \times n}, n\in \mathbb{N}$,
we define the norm $\| \cdot \|_{X}$ by
$
\| \underline{v} \|_{X}:= \sqrt{v^T X v}
$ 
for all $\underline{v} \in \mathbb{R}^{n}$.

For better readability, whenever it is clear from the context we do not write the restriction of a function to an interface explicitly, e.g., we write $\int_{\Gamma^{(k,\ell)}} u^{(k)} \; \mathrm{d}x$ instead of $\int_{\Gamma^{(k,\ell)}} u^{(k)}|_{\Gamma^{(k,\ell)}} \; \mathrm{d}x$.

Let $D^{(k)}$ be the matrix representation of the bilinear form $d_e^{(k)}(\cdot,\cdot)$. Lemma~4.2 in \cite{SchneckenleitnerTakacs:2020}, written in matrix from, states that
\begin{equation}
	\label{eq:patchwise equivalent}
	A^{(k)} \eqsim D^{(k)}.
\end{equation}

Due to \cite[Lemma 4.13]{SchneckenleitnerTakacs:2019}, we know that the matrices $D^{(k)}$ and $\widehat D^{(k)}$ are spectrally equivalent, i.e., we obtain 
\begin{equation}
	\label{eq:phys para equivalent}
	D^{(k)} \eqsim \widehat D^{(k)}.
\end{equation}

Now, we show that the spaces $\widehat{V}_1^{(k)}$ and $\widehat{V}_2^{(k)}$ are orthogonal with respect to the scalar product $\widehat{d}_e^{(k)}(\cdot, \cdot)$
for every $k=1,\dots, K$.
\begin{lemma}
	\label{lem:orthogonal splitting}
	The spaces $\widehat{V}_1^{(k)}$ and $\widehat{V}_2^{(k)}$ are  $\widehat{d}_e^{(k)}(\cdot, \cdot)$-orthogonal.
\end{lemma}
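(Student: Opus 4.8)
The plan is to verify the orthogonality directly on the generating elements of the two spaces. Since $\widehat{d}_e^{(k)}(\cdot,\cdot)$ is symmetric, it suffices to prove $\widehat{d}_e^{(k)}(\widehat{u}_{1,e}^{(k)},\widehat{v}_{2,e}^{(k)})=0$ for all $\widehat{u}_{1,e}^{(k)}\in\widehat{V}_1^{(k)}$ and all $\widehat{v}_{2,e}^{(k)}\in\widehat{V}_2^{(k)}$. By the definitions of the two spaces I write $\widehat{u}_{1,e}^{(k)}=\bigl(\widehat{u}^{(k)},(\pi^{(k,\ell)}(\widehat{u}^{(k)}|_{\widehat\Gamma^{(k,\ell)}}))_{\ell\in\mathcal{N}_\Gamma(k)}\bigr)$ with $\widehat{u}^{(k)}\in\widehat{V}^{(k)}$, and $\widehat{v}_{2,e}^{(k)}=\bigl(0,(\widehat{v}^{(k,\ell)})_{\ell\in\mathcal{N}_\Gamma(k)}\bigr)$ with $\widehat{v}^{(k,\ell)}\in\widehat{V}^{(k,\ell)}$.

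First I would substitute these representations into the definition of $\widehat{d}_e^{(k)}$. The volume contribution $\int_{\widehat\Omega}\nabla\widehat{u}^{(k)}\cdot\nabla\widehat{v}^{(k)}\,\mathrm{d}x$ vanishes because the first component of $\widehat{v}_{2,e}^{(k)}$ is $\widehat{v}^{(k)}=0$. In the $\ell$-th interface term $\frac{\delta}{\widehat{h}_{k\ell}}\int_{\widehat\Gamma^{(k,\ell)}}(\widehat{u}^{(k,\ell)}-\widehat{u}^{(k)})(\widehat{v}^{(k,\ell)}-\widehat{v}^{(k)})\,\mathrm{d}s$ the same fact $\widehat{v}^{(k)}=0$ turns the second factor into $\widehat{v}^{(k,\ell)}$, while the first factor is $\pi^{(k,\ell)}(\widehat{u}^{(k)}|_{\widehat\Gamma^{(k,\ell)}})-\widehat{u}^{(k)}|_{\widehat\Gamma^{(k,\ell)}}$, i.e. the error of the $L^2(\widehat\Gamma^{(k,\ell)})$-projection of $\widehat{u}^{(k)}|_{\widehat\Gamma^{(k,\ell)}}$ onto $\widehat{V}^{(k,\ell)}$.

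The decisive step is then the Galerkin orthogonality built into the projection: since $\pi^{(k,\ell)}$ is the $L^2(\widehat\Gamma^{(k,\ell)})$-orthogonal projection onto $\widehat{V}^{(k,\ell)}$ and $\widehat{v}^{(k,\ell)}\in\widehat{V}^{(k,\ell)}$,
\[
\int_{\widehat\Gamma^{(k,\ell)}}\bigl(\pi^{(k,\ell)}(\widehat{u}^{(k)}|_{\widehat\Gamma^{(k,\ell)}})-\widehat{u}^{(k)}|_{\widehat\Gamma^{(k,\ell)}}\bigr)\,\widehat{v}^{(k,\ell)}\,\mathrm{d}s=0 .
\]
Hence each interface term vanishes, and summing over $\ell\in\mathcal{N}_\Gamma(k)$ yields $\widehat{d}_e^{(k)}(\widehat{u}_{1,e}^{(k)},\widehat{v}_{2,e}^{(k)})=0$, which by symmetry proves the claim.

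There is no real obstacle here: the argument is just careful bookkeeping of the two-component structure together with one invocation of the defining property of $\pi^{(k,\ell)}$. The only point deserving attention is that the bilinear pairing appearing in the SIPG jump term of $\widehat{d}_e^{(k)}$ and the $L^2(\widehat\Gamma^{(k,\ell)})$ inner product defining $\pi^{(k,\ell)}$ coincide up to the positive constant $\delta/\widehat{h}_{k\ell}$; this is precisely why $\widehat{V}_1^{(k)}$ was constructed using the $L^2$-projection onto $\widehat{V}^{(k,\ell)}$, and it is exactly what makes the splitting orthogonal with respect to the $\widehat{d}_e^{(k)}$ scalar product.
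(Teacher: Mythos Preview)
Your proof is correct and follows essentially the same approach as the paper: substitute the generic elements of $\widehat V_1^{(k)}$ and $\widehat V_2^{(k)}$ into $\widehat d_e^{(k)}(\cdot,\cdot)$, observe that the volume term vanishes since the first component of any $\widehat v_{2,e}^{(k)}$ is zero, and then use the defining $L^2(\widehat\Gamma^{(k,\ell)})$-orthogonality of $\pi^{(k,\ell)}$ to kill each interface term. Your added remark that the jump term differs from the $L^2$ inner product only by the positive constant $\delta/\widehat h_{k\ell}$ is a nice clarification but not an additional idea beyond the paper's argument.
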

\begin{proof}
	Let $\widehat{u}_{1,e}^{(k)}=\left(\widehat{u}^{(k)}, \left(\pi^{(k,\ell)}\widehat{u}^{(k)} \right)_{\ell \in \mathcal{N}_\Gamma(k)} \right) \in \widehat{V}_1^{(k)}$ and \mbox{$\widehat{u}_{2,e}^{(k)}=\left(0, (\widehat{u}^{(k, \ell)})_{\ell \in \mathcal{N}_\Gamma(k)}\right) \in \widehat{V}_2^{(k)}$} be arbitrary but fixed. Then, 
	\[
	\begin{aligned}
		\widehat{d}_e^{(k)}(\widehat{u}_{1,e}^{(k)}, \widehat{u}_{2,e}^{(k)}) &= 
		\underbrace{\int_{\widehat{\Omega}} \nabla \widehat{u}_{}^{(k)} \cdot \nabla 0 \; \mathrm{d}x}_{\displaystyle = 0} + \sum_{\ell \in \mathcal{N}_\Gamma(k)} \frac{\delta}{\widehat{h}_{k\ell}} \int_{\widehat{\Gamma}^{(k,\ell)}}  (\pi^{(k,\ell)}\widehat{u}_{}^{(k)} - \widehat{u}_{}^{(k)}) (\widehat{u}_{}^{(k,\ell)} - 0) \; \mathrm{d}s.
	\end{aligned}
	\]
	Since $\pi^{(k,\ell)}$ is the $L^2(\widehat{\Gamma}^{(k,\ell)})$-orthogonal projection, we obtain 
	\[
	\int_{\widehat{\Gamma}^{(k,\ell)}}(\pi^{(k,\ell)}\widehat{u}_{}^{(k)} - \widehat{u}_{}^{(k)}) v^{(k,\ell)} = 0
	\qquad \mbox{for all } v^{(k,\ell)} \in \widehat{V}^{(k,\ell)}
	\]
	and thus
	$
	\widehat{d}_e^{(k)}(\widehat{u}_{1,e}^{(k)}, \widehat{u}_{2,e}^{(k)}) = 0,
	$
	which finishes the proof.
\end{proof}

Using Lemma~\ref{lem:orthogonal splitting}, we obtain 
\[
\widehat{D}^{(k)} = \sum_{i=1}^2 E_i^{(k)} \widehat{D}_i^{(k)}  (E_i^{(k)})^\top,
\]
where the matrices $\widehat{D}_i^{(k)}$, $i = 1,2$, are the matrix
representations of $\widehat{d}_e^{(k)}(\cdot, \cdot)$ for functions in $\widehat{V}_1^{(k)}$ and $\widehat{V}_2^{(k)}$, respectively, and $E_i^{(k)}$ represents the canonical embedding from \mbox{$\widehat{V}_i^{(k)}$ into $\widehat{V}_e^{(k)}$}.
The following lemma shows that the matrices $\widehat{D}_1^{(k)}$ and $\widetilde{D}_1^{(k)}$ are
equivalent up to the effect of constant functions. 

\begin{lemma}
	\label{lem:D equivalence}
	The equivalence $\widehat{D}_1^{(k)} + \widehat M^{(k)}\eqsim\widetilde{D}_1^{(k)}$ holds.
\end{lemma}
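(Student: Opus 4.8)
The plan is to translate the matrix equivalence into an equivalence of bilinear forms on the single space $\widehat V^{(k)}$, via the isomorphism $\iota^{(k)}\colon\widehat V^{(k)}\to\widehat V_1^{(k)}$, $\iota^{(k)}\widehat u^{(k)}:=\big(\widehat u^{(k)},(\pi^{(k,\ell)}(\widehat u^{(k)}|_{\widehat\Gamma^{(k,\ell)}}))_{\ell\in\mathcal N_\Gamma(k)}\big)$, which by construction maps the basis $\widehat\Phi^{(k)}$ onto the basis of $\widehat V_1^{(k)}$ fixed in Section~\ref{subsec:local preconditioners}. Under this identification, $\widetilde D_1^{(k)}$ is the Gram matrix of $(\widehat u^{(k)},\widehat v^{(k)})\mapsto\int_{\widehat\Omega}\nabla\widehat u^{(k)}\cdot\nabla\widehat v^{(k)}\,\mathrm{d}x+\alpha\int_{\widehat\Omega}\widehat u^{(k)}\widehat v^{(k)}\,\mathrm{d}x$, while, substituting $\widehat u^{(k,\ell)}=\pi^{(k,\ell)}(\widehat u^{(k)}|_{\widehat\Gamma^{(k,\ell)}})$ into $\widehat d_e^{(k)}$ and using that the $\widehat V_2^{(k)}$-component of $\iota^{(k)}\widehat u^{(k)}$ is exactly $\pi^{(k,\ell)}\widehat u^{(k)}$ (cf.\ Lemma~\ref{lem:orthogonal splitting}), $\widehat D_1^{(k)}$ is the Gram matrix of $(\widehat u^{(k)},\widehat v^{(k)})\mapsto\int_{\widehat\Omega}\nabla\widehat u^{(k)}\cdot\nabla\widehat v^{(k)}\,\mathrm{d}x+\sum_{\ell\in\mathcal N_\Gamma(k)}\frac{\delta}{\widehat h_{k\ell}}\int_{\widehat\Gamma^{(k,\ell)}}(I-\pi^{(k,\ell)})\widehat u^{(k)}\,(I-\pi^{(k,\ell)})\widehat v^{(k)}\,\mathrm{d}s$. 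I would write these out first; the Dirichlet-energy term is common to both, so only the interface penalty term $J^{(k)}(\widehat u^{(k)}):=\sum_{\ell\in\mathcal N_\Gamma(k)}\frac{\delta}{\widehat h_{k\ell}}\|(I-\pi^{(k,\ell)})\widehat u^{(k)}\|_{L^2(\widehat\Gamma^{(k,\ell)})}^2$ has to be compared with $\alpha\|\widehat u^{(k)}\|_{L^2(\widehat\Omega)}^2$, modulo the correction $\widehat M^{(k)}$, which only accounts for the constant mode.

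For the bound $\widehat D_1^{(k)}+\widehat M^{(k)}\lesssim\widetilde D_1^{(k)}$, the $\widehat M^{(k)}$-contribution is dominated by $\|\widehat u^{(k)}\|_{L^2(\widehat\Omega)}^2$ by Cauchy--Schwarz, so everything reduces to the key estimate $J^{(k)}(\widehat u^{(k)})\lesssim\|\widehat u^{(k)}\|_{H^1(\widehat\Omega)}^2$ (which, combined with Friedrichs' inequality when $\alpha=0$, also bounds $J^{(k)}$ by the pure Dirichlet energy). To prove it, I would use that the $L^2(\widehat\Gamma^{(k,\ell)})$-orthogonal projection $\pi^{(k,\ell)}$ is a best approximation in $\widehat V^{(k,\ell)}$ and reproduces constants, so that $\|(I-\pi^{(k,\ell)})\widehat u^{(k)}\|_{L^2(\widehat\Gamma^{(k,\ell)})}=\min_{\widehat w\in\widehat V^{(k,\ell)}}\|\widehat u^{(k)}-\widehat w\|_{L^2(\widehat\Gamma^{(k,\ell)})}$, bound this best-approximation error in terms of the mesh resolution of the neighbour's trace space times a seminorm of $\widehat u^{(k)}|_{\widehat\Gamma^{(k,\ell)}}$, and finally invoke a scaled (discrete) trace inequality to pass from $\widehat\Gamma^{(k,\ell)}$ to $\widehat\Omega$; the weight $\widehat h_{k\ell}^{-1}$ is absorbed because $\widehat h_{k\ell}$ is, up to the constants of Assumptions~\ref{ass:non-singularity} and~\ref{ass:quasi-uniformity}, comparable to that neighbour resolution. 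Since this is precisely the kind of estimate underlying the equivalence $D^{(k)}\eqsim\widehat D^{(k)}$ in~\eqref{eq:phys para equivalent}, I would reuse the scaled trace and approximation estimates from \cite{SchneckenleitnerTakacs:2019,SchneckenleitnerTakacs:2020} rather than reprove them.

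The reverse bound $\widetilde D_1^{(k)}\lesssim\widehat D_1^{(k)}+\widehat M^{(k)}$ is the easy direction: since $J^{(k)}\ge 0$ and $\widehat M^{(k)}$ is positive semidefinite, it suffices to control the zeroth-order term $\alpha\|\widehat u^{(k)}\|_{L^2(\widehat\Omega)}^2$ of $\widetilde D_1^{(k)}$ by the Dirichlet energy plus $\widehat M^{(k)}$. When $\alpha=0$ this is immediate, the Dirichlet energy being the common term; when $\alpha=1$ it follows from the Poincaré inequality $\|\widehat u^{(k)}\|_{L^2(\widehat\Omega)}^2\lesssim|\widehat u^{(k)}|_{H^1(\widehat\Omega)}^2+\big(\int_{\widehat\Omega}\widehat u^{(k)}\,\mathrm{d}x\big)^2$, the last term being exactly the quantity carried by $\widehat M^{(k)}$. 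Collecting the two directions gives $\widehat D_1^{(k)}+\widehat M^{(k)}\eqsim\widetilde D_1^{(k)}$.

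The only genuinely technical point is the key estimate $J^{(k)}(\widehat u^{(k)})\lesssim\|\widehat u^{(k)}\|_{H^1(\widehat\Omega)}^2$ with a constant independent of the mesh size: one has to deal with the fact that $\pi^{(k,\ell)}$ projects onto the (possibly non-matching, possibly coarser) trace space of the \emph{neighbouring} patch, and to ensure that the approximation order of $\pi^{(k,\ell)}$ exactly cancels the penalty weight $\widehat h_{k\ell}^{-1}$ after the scaled trace inequality; everything else is Cauchy--Schwarz together with the Friedrichs and Poincaré inequalities.
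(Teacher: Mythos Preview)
Your argument is essentially the paper's: for the forward bound you control the interface penalty
$J^{(k)}(\widehat u^{(k)})=\sum_{\ell}\frac{\delta}{\widehat h_{k\ell}}\|(I-\pi^{(k,\ell)})\widehat u^{(k)}\|_{L^2(\widehat\Gamma^{(k,\ell)})}^2$
by an $L^2$-projection approximation estimate followed by a discrete trace inequality, exactly as in the paper's proof (which cites \cite{SandeManni:2019} and \cite{EvansHughes:2013} for these two ingredients and obtains directly $J^{(k)}\lesssim |\widehat u^{(k)}|_{H^1(\widehat\Omega)}^2$, so your detour via the full $H^1$-norm and Friedrichs is not needed but harmless).

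The one point to correct is your reading of $\widehat M^{(k)}$: in the paper it is the \emph{full} parametric mass matrix, i.e.\ $(\underline v_{1,e}^{(k)})^\top\widehat M^{(k)}\underline v_{1,e}^{(k)}=\|\widehat v_1^{(k)}\|_{L^2(\widehat\Omega)}^2$, not the rank-one operator $\big(\int_{\widehat\Omega}\widehat u^{(k)}\big)\big(\int_{\widehat\Omega}\widehat v^{(k)}\big)$. With the correct interpretation the reverse bound $\widetilde D_1^{(k)}\le\widehat D_1^{(k)}+\widehat M^{(k)}$ is immediate---drop the nonnegative penalty term and use $\alpha\le 1$---so no Poincar\'e inequality is required. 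Your Poincar\'e argument is not wrong (it would in fact establish the sharper statement with the rank-one matrix in place of $\widehat M^{(k)}$), but it is more than what is asked. Conversely, in the forward direction the bound $\widehat M^{(k)}\lesssim\widetilde D_1^{(k)}$ still needs Friedrichs when $\alpha=0$, as you note; the paper's ``Cauchy--Schwarz'' at that step is really relying on the Dirichlet boundary condition built into $\widehat V^{(k)}$.
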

\begin{proof}
	We start by showing $\widehat{D}_1^{(k)} \lesssim \widetilde{D}_1^{(k)}$. Using the definitions, we obtain
	\[
	\begin{aligned}
		&(\underline{v}_{1,e}^{(k)})^\top \widehat{D}_1^{(k)} \underline{v}_{1,e}^{(k)} =  \widehat{d}_e^{(k)}(\widehat{v}_{1,e}^{(k)}, \widehat{v}_{1,e}^{(k)}) \\
		&\qquad= 
		\int_{\widehat{\Omega}} |\nabla \widehat{v}_1^{(k)}|^2 \; \text{d}x
		+ \sum_{\ell \in \mathcal{N}_\Gamma(k)} \frac{\delta}{\widehat{h}_{k\ell}} \int_{\widehat{\Gamma}^{(k,\ell)}} 
		(\pi^{(k,\ell)}\widehat{v}_1^{(k)}- \widehat{v}_1^{(k)})^2  \; \text{d}s
	\end{aligned}
	\] 
	for any given $\underline{v}_{1,e} \in \widehat{V}_1^{(k)}$.
	Using an approximation error estimate, cf. \cite{SandeManni:2019},
	\[
	\int_{\widehat{\Gamma}^{(k,\ell)}}^{}(\widehat{v}_1^{(k)}-(\mathcal \pi^{(k,\ell)}\widehat{v}_1^{(k)}))^2 \; \text{d}s
	\lesssim
	\widehat{h}_{k\ell}^2 \int_{\widehat{\Gamma}^{(k,\ell)}} | \nabla \widehat{v}_1^{(k)} |^2 \; \text{d}x 
	\] 
	and a discrete trace inequality, cf. \cite[Lemma 4.3]{EvansHughes:2013},
	\[ 
	\int_{\widehat{\Gamma}^{(k,\ell)}} | \nabla \widehat{v}_1^{(k)} |^2 \; \text{d}x 
	\lesssim
	\widehat{h}_{k\ell}^{-1} \int_{\widehat{\Omega}^{}} | \nabla \widehat{v}_1^{(k)} |^2 \; \text{d}x
	\] 
	and $\delta \lesssim 1$,
	we get
	\begin{equation}\label{eq:lem:D equivalence} 
		(\underline{v}_{1,e}^{(k)})^\top \widehat{D}_1^{(k)} \underline{v}_{1,e}^{(k)}
		\lesssim
		(1+\delta) \int_{\widehat{\Omega}^{}}^{}|\nabla \widehat{v}_1^{(k)}|^2 \; \text{d}x \lesssim
		\tilde{d}_1^{(k)}(\widehat{v}_{1,e}^{(k)}, \widehat{v}_{1,e}^{(k)}) = (\underline{v}_{1,e}^{(k)})^\top \widetilde{D}^{(k)}_1 \underline{v}_{1,e}^{(k)}, 
	\end{equation} 
	i.e., $\widehat{D}_1^{(k)}\lesssim \widetilde{D}^{(k)}_1$. 
	%Since $\alpha \gtrsim 1$, 
	We obtain using the Cauchy-Schwarz inequality also 
	\[
	(\underline{v}_{1,e}^{(k)})^\top \widehat M^{(k)} \underline{v}_{1,e}^{(k)}
	=  \int_{\widehat \Omega} (v_{1}^{(k)})^2 \mathrm dx
	% \quad \le  \int_{\widehat \Omega} (v_{1}^{(k)})^2 \mathrm dx 
	\lesssim (\underline{v}_{1,e}^{(k)})^\top \widetilde{D}_1^{(k)} \underline{v}_{1,e}^{(k)}.
	\] 
	Thus, we obtain $\widehat{D}_1^{(k)} + \widehat M^{(k)} \lesssim \widetilde{D}_1^{(k)}$.
	
	Now, we show the other direction. 
	By definition, 
	\[
	(\underline{v}_{1,e}^{(k)})^\top \widetilde{D}_1^{(k)} \underline{v}_{1,e}^{(k)} = \widetilde{d}_1^{(k)}(\widehat{v}_{1,e}^{(k)}, \widehat{v}_{1,e}^{(k)}) = 
	\int_{\widehat{\Omega}} |\nabla \widehat{v}_1^{(k)}|^2 \; \textrm{d}x 
	+ \int_{\widehat \Omega} (v_{1}^{(k)})^2 \mathrm dx.
	\] 
	Hence, we get
	\begin{align*}
		(\underline{v}_{1,e}^{(k)})^\top \widetilde{D}_1^{(k)} \underline{v}_{1,e}^{(k)}
		&\le \widehat{d}_e^{(k)}(\widehat{v}_{1,e}^{(k)}, \widehat{v}_{1,e}^{(k)})
		+ \int_{\widehat \Omega} (v_{1}^{(k)})^2 \mathrm dx
		\\& = (\underline{v}_{1,e}^{(k)})^\top (\widehat{D}_1^{(k)}+ \widehat M^{(k)}) \underline{v}_{1,e}^{(k)},
	\end{align*}
	which concludes the proof.
\end{proof}

Using~\cite[Lemma 4.13]{SchneckenleitnerTakacs:2019} and \eqref{eq:patchwise equivalent} 
% \eqref{eq:phys para equivalent}, 
%Lemma~\ref{lem:D equivalence}, and $\widetilde{D}_2^{(k)} = \widehat{D}_2^{(k)}$, 
we obtain
\begin{equation}
	\begin{aligned}\label{eq:pre combined}
		A^{(k)}+ \alpha H_k^{-2} M^{(k)} 
		&\eqsim \widehat{A}^{(k)}+ \alpha \widehat M^{(k)} 
		&\eqsim \widehat{D}^{(k)}+ \alpha \widehat M^{(k)} %\eqsim \widetilde{D}^{(k)},
	\end{aligned}
\end{equation}
which furthermore characterizes the convergence of the chosen solver to compute the solution of~\eqref{eq:basis computation}. In fact, \eqref{eq:pre combined} implies together with \eqref{eq:phys para equivalent}, 
Lemma~\ref{lem:D equivalence}, and $\widetilde{D}_2^{(k)} = \widehat{D}_2^{(k)}$ the equivalence
\begin{equation}\label{eq:combined}
	\begin{aligned}
		P^{(k)} \eqsim (A_{\Delta \Delta}^{(k)})^{-1}.
	\end{aligned}
\end{equation}

Now, we show that the approximate Schur complement matrix
\[
\widehat{F} =
B_\Delta
P
B_\Delta^\top
+
B\Psi (\Psi^\top A \Psi)^{-1} \Psi^\top B^\top,
\]
is spectrally equivalent to
\[
F =
\begin{pmatrix}
	B_\Gamma & 0 
\end{pmatrix}
\begin{pmatrix}
	S & C_\Gamma^\top \\
	C_\Gamma & 
\end{pmatrix}^{-1}
\begin{pmatrix}
	B_\Gamma^\top \\ 0 
\end{pmatrix}
+
B\Psi (\Psi^\top A \Psi)^{-1} \Psi^\top B^\top,
\]
the Schur complement matrix from~\cite{SchneckenleitnerTakacs:2020}.
\begin{lemma}
	\label{lem:F equivalence}
	The equivalence $F\eqsim \widehat F$ holds.
\end{lemma}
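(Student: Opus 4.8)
The plan is to reduce everything to the patchwise equivalence $P^{(k)}\eqsim (A_{\Delta\Delta}^{(k)})^{-1}$ of \eqref{eq:combined}, together with the standard dual--primal reformulation of the FETI-DP operator used in \cite{SchneckenleitnerTakacs:2020}. First I would note that the two summands $B\Psi (\Psi^\top A \Psi)^{-1}\Psi^\top B^\top$ appearing in $F$ and in $\widehat F$ are literally the same matrix, so it suffices to compare the remaining summands. Since, for symmetric positive semidefinite matrices, $\underline v^\top (X_1+X_2)\underline v=\underline v^\top X_1\underline v+\underline v^\top X_2\underline v$ for all $\underline v$, the relation $\eqsim$ is preserved under adding a common term; hence $F\eqsim\widehat F$ will follow once I establish
\[
\begin{pmatrix} B_\Gamma & 0 \end{pmatrix}\begin{pmatrix} S & C_\Gamma^\top \\ C_\Gamma & 0\end{pmatrix}^{-1}\begin{pmatrix} B_\Gamma^\top \\ 0\end{pmatrix}\eqsim B_\Delta P B_\Delta^\top .
\]

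Second, I would recall the algebraic identity (this is how $F$ is obtained in \cite{SchneckenleitnerTakacs:2020}, see also \cite{MandelDohrmannTezaur:2005a}) that rewrites the saddle-point solve on the left-hand side as $B_\Delta A_{\Delta\Delta}^{-1} B_\Delta^\top$; the point is that $B_\mathrm{I}^{(k)}=0$, so the interior dofs decouple from $B$, and that the primal continuity encoded in $C_\Gamma$ eliminates the corner dofs exactly as they are removed in the block $A_{\Delta\Delta}$ of \eqref{eq:subdiv}. (If $F$ is built instead from the $d_e^{(k)}$-based Schur complements, one also uses $A^{(k)}\eqsim D^{(k)}$ from \eqref{eq:patchwise equivalent} at this point.) Then, since $A_{\Delta\Delta}=\mathrm{diag}(A_{\Delta\Delta}^{(1)},\dots,A_{\Delta\Delta}^{(K)})$ and $P=\mathrm{diag}(P^{(1)},\dots,P^{(K)})$ are block-diagonal with the same block pattern, the patchwise equivalences \eqref{eq:combined} (each block being symmetric positive definite, so $\eqsim$ is meaningful) assemble to $P\eqsim A_{\Delta\Delta}^{-1}$.

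Finally, I would propagate this through the congruence $\underline v\mapsto B_\Delta^\top\underline v$: for every $\underline v$,
\[
\underline v^\top B_\Delta P B_\Delta^\top \underline v=(B_\Delta^\top\underline v)^\top P(B_\Delta^\top\underline v)\eqsim(B_\Delta^\top\underline v)^\top A_{\Delta\Delta}^{-1}(B_\Delta^\top\underline v)=\underline v^\top B_\Delta A_{\Delta\Delta}^{-1}B_\Delta^\top\underline v ,
\]
so $B_\Delta P B_\Delta^\top\eqsim B_\Delta A_{\Delta\Delta}^{-1}B_\Delta^\top$; combining with the identity of the previous step yields the displayed equivalence and hence, after re-adding the common coarse term, the lemma. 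I expect the only non-routine step to be the identification of the left-hand saddle-point expression with $B_\Delta A_{\Delta\Delta}^{-1}B_\Delta^\top$, i.e.\ matching the notation of \cite{SchneckenleitnerTakacs:2020} (the constraint matrix $C_\Gamma$, the Schur complement $S$) to the objects $B_\Delta$, $A_{\Delta\Delta}$, $\Psi$ used here; everything else is block-diagonal bookkeeping together with the stability of $\eqsim$ under congruence transformations and under adding a common summand.
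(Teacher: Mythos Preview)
Your proposal is correct and follows essentially the same route as the paper: the paper also reduces the saddle-point block to $B_\Delta A_{\Delta\Delta}^{-1}B_\Delta^\top$ via the block-matrix computation and then invokes \eqref{eq:combined} to replace $A_{\Delta\Delta}^{-1}$ by $P$ under the congruence $B_\Delta(\cdot)B_\Delta^\top$. The only cosmetic difference is that the paper carries out the $3\times 3$ block inversion explicitly, whereas you cite it; your parenthetical hedge about the $d_e^{(k)}$-based Schur complement is unnecessary here since $S$ is built from $A^{(k)}$.
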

\begin{proof}
	Observe that the splitting of the matrices $A^{(k)}$ into blocks
	and the definition of the Schur-complement matrix $S$ 
	immediately imply
	\[
	\begin{aligned}
		&
		\begin{pmatrix}
			B_\Gamma & 0 
		\end{pmatrix}
		\begin{pmatrix}
			S & C_\Gamma^\top \\
			C_\Gamma & 
		\end{pmatrix}^{-1}
		\begin{pmatrix}
			B_\Gamma^\top \\ 0 
		\end{pmatrix}
		=
		\begin{pmatrix}
			B & 0 
		\end{pmatrix}
		\begin{pmatrix}
			A & C^\top \\
			C & 
		\end{pmatrix}^{-1}
		\begin{pmatrix}
			B^\top \\ 0 
		\end{pmatrix}
		\\&\qquad= 
		\begin{pmatrix}
			B_\Delta & B_C & 0 
		\end{pmatrix}
		\begin{pmatrix}
			A_{\Delta \Delta} & A_{\Delta C} &  \\
			A_{C \Delta} & A_{CC} & I \\
			& I & 
		\end{pmatrix}^{-1}
		\begin{pmatrix}
			B_\Delta^\top \\ B_C^\top \\ 0  
		\end{pmatrix}
		\\&\qquad= 
		\begin{pmatrix}
			B_\Delta & B_C & 0 
		\end{pmatrix}
		\begin{pmatrix}
			A_{\Delta \Delta}^{-1} & 0 & -A_{\Delta \Delta}^{-1} A_{\Delta C} \\
			0 & 0 & I \\
			-A_{C\Delta} A_{\Delta \Delta}^{-1} & I & A_{\mathrm{C\Delta}}A_{\mathrm{\Delta C}}^{-1}A_{\mathrm{\Delta C}}-A_{\mathrm{CC}}
		\end{pmatrix}
		\begin{pmatrix}
			B_\Delta^\top \\ B_C^\top \\ 0  
		\end{pmatrix}
		= B_\Delta A_{\Delta \Delta}^{-1} B_\Delta^\top.
	\end{aligned}
	\]
	This shows
	\[
	F = 
	B_\Delta
	A_{\Delta\Delta}^{-1}
	B_\Delta^\top
	+
	B\Psi (\Psi^\top A \Psi)^{-1} \Psi^\top B^\top.
	\]
	Using~\eqref{eq:combined}, we obtain the desired result.
\end{proof}

%The next lemma shows the equivalence of the standard Schur complement matrix $F$ and
%the approximate Schur complement matrix
%\begin{lemma}
%	\label{lem:F equivalence}
%	We have $F \eqsim \widehat{F}$.
%\end{lemma}
%\begin{proof}
%	{\color{red} [TODO: Fix this proof.]}
%	By the definitions of $F$ and $\widehat{F}$ and the
%	spectral equivalence of $S$ and $A$ it is sufficient to show 
%	\[
%		\underline{u}^\top P^{-1} \underline{u} \eqsim \underline{u}^\top A \underline{u}.
% 	\]
% 	Hence, Theorem~\ref{th:equivalent A} finishes the proof.
%\end{proof}

In the following we show a similar equivalence for the scaled Dirichlet preconditioners.
\begin{lemma}
	\label{lem:Prec equivalence}
	The equivalence
	$ M_{\mathrm{sD}} := B_\Gamma \mathcal{D}^{-1}{S}\mathcal{D}^{-1}B_\Gamma^\top  \eqsim \widehat{M}_{\mathrm{sD}}$ holds.  
\end{lemma}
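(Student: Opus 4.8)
The plan is to reduce the statement to a patchwise spectral equivalence of the two Schur complements and then to propagate it through the common congruence transformation that produces $M_{\mathrm{sD}}$ and $\widehat M_{\mathrm{sD}}$. Here $S=\mathrm{diag}(S^{(1)},\dots,S^{(K)})$ is the block-diagonal matrix of the patchwise Schur complements obtained by eliminating the interior ($\mathrm I$) degrees of freedom, i.e.\ $S^{(k)}:=A_{\Gamma\Gamma}^{(k)}-A_{\Gamma\mathrm I}^{(k)}(A_{\mathrm{II}}^{(k)})^{-1}A_{\mathrm I\Gamma}^{(k)}$, exactly as in~\cite{SchneckenleitnerTakacs:2020}, and $\widehat S_D=\mathrm{diag}(\widehat S_D^{(1)},\dots,\widehat S_D^{(K)})$ with $\widehat S_D^{(k)}$ as in~\eqref{eq:14b}, built with the same $\mathrm I$/$\Gamma$ partition. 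Note that both $A_{\mathrm{II}}^{(k)}$ and $\widehat D_{\mathrm{II}}^{(k)}$ are symmetric positive definite (they coincide with the $H^1$-seminorm block on functions that vanish on $\partial\Omega^{(k)}$, which is positive definite; this is also consistent with \eqref{eq:patchwise equivalent} and \eqref{eq:phys para equivalent}), so both Schur complements are well defined.

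First I would write down the variational characterization of the Schur complement: for every boundary vector $\underline v_\Gamma$,
\[
\underline v_\Gamma^\top S^{(k)}\underline v_\Gamma
= \min_{\underline v_{\mathrm I}} \begin{pmatrix}\underline v_{\mathrm I}\\ \underline v_\Gamma\end{pmatrix}^\top A^{(k)} \begin{pmatrix}\underline v_{\mathrm I}\\ \underline v_\Gamma\end{pmatrix},
\qquad
\underline v_\Gamma^\top \widehat S_D^{(k)}\underline v_\Gamma
= \min_{\underline v_{\mathrm I}} \begin{pmatrix}\underline v_{\mathrm I}\\ \underline v_\Gamma\end{pmatrix}^\top \widehat D^{(k)} \begin{pmatrix}\underline v_{\mathrm I}\\ \underline v_\Gamma\end{pmatrix},
\]
the minimizer in each case being $\underline v_{\mathrm I}=-(A_{\mathrm{II}}^{(k)})^{-1}A_{\mathrm I\Gamma}^{(k)}\underline v_\Gamma$, resp.\ $\underline v_{\mathrm I}=-(\widehat D_{\mathrm{II}}^{(k)})^{-1}\widehat D_{\mathrm I\Gamma}^{(k)}\underline v_\Gamma$. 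Chaining \eqref{eq:patchwise equivalent} and \eqref{eq:phys para equivalent} gives $A^{(k)}\eqsim\widehat D^{(k)}$ with constants uniform in $k$ (they depend only on $p$, $\alpha$ and the constants from Assumptions~\ref{ass:non-singularity} and \ref{ass:quasi-uniformity}); inserting this equivalence into the two minimization problems termwise yields $S^{(k)}\eqsim\widehat S_D^{(k)}$ for every $k$, and hence $S\eqsim\widehat S_D$ for the block-diagonal matrices.

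Second, I would invoke the elementary fact that spectral equivalence is preserved under congruence: if $X\eqsim Y$ for symmetric positive semidefinite $X,Y\in\mathbb R^{n\times n}$ and $Z\in\mathbb R^{n\times m}$ is arbitrary, then $Z^\top X Z\eqsim Z^\top Y Z$, since $\underline w^\top Z^\top X Z\underline w=(Z\underline w)^\top X(Z\underline w)\eqsim(Z\underline w)^\top Y(Z\underline w)=\underline w^\top Z^\top Y Z\underline w$ for all $\underline w\in\mathbb R^m$. Applying this with $Z:=\mathcal D^{-1}B_\Gamma^\top$, $X:=S$ and $Y:=\widehat S_D$ gives $M_{\mathrm{sD}}=Z^\top S Z\eqsim Z^\top\widehat S_D Z=\widehat M_{\mathrm{sD}}$, which is the assertion.

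I expect no serious obstacle; the only point requiring a little care is to make sure that $S$ and $\widehat S_D$ are formed with respect to exactly the same $\mathrm I$/$\Gamma$ splitting of the degrees of freedom, so that the congruence step applies verbatim, and that the interior blocks are invertible. Both are guaranteed by \eqref{eq:subdiv} together with \eqref{eq:patchwise equivalent} and \eqref{eq:phys para equivalent}; everything else is bookkeeping.
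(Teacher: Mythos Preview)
Your proof is correct and follows essentially the same approach as the paper: both use the variational (energy-minimization) characterization of the Schur complement together with the spectral equivalence $A^{(k)}\eqsim\widehat D^{(k)}$ obtained from \eqref{eq:patchwise equivalent} and \eqref{eq:phys para equivalent}, and then pass the resulting $S\eqsim\widehat S_D$ through the congruence $B_\Gamma\mathcal D^{-1}(\cdot)\mathcal D^{-1}B_\Gamma^\top$. Your write-up is in fact slightly more explicit about the invertibility of the interior blocks and the congruence step, but there is no substantive difference.
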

\begin{proof}
	Note that the Schur-complements
	$
	\widehat{S}_D^{(k)} = \widehat D_{\Gamma \Gamma}^{(k)} - \widehat D_{\Gamma \mathrm{I}}^{(k)} (\widehat{D}_{\mathrm{II}}^{(k)})^{-1} \widehat D_{\mathrm{I} \Gamma}^{(k)}
	$ 
	and
	$
	S^{(k)} = A_{\Gamma \Gamma}^{(k)} - A_{\Gamma \mathrm{I}}^{(k)} (A_{\mathrm{II}}^{(k)})^{-1} A_{\mathrm{I} \Gamma}^{(k)}
	$
	model the discrete harmonic extension, i.e., that extension that minimizes the energy norm. Since
	the norms on the physical domain and the parameter domain are equivalent, cf., e.g.,
	Lemma 4.13 in \cite{SchneckenleitnerTakacs:2019} and $D^{(k)} \eqsim A^{(k)}$, we know that
	\begin{align*}
		\| \underline v_\Gamma^{(k)} \|_{\widehat{S}^{(k)}}
		= \inf_{\underline w_{\mathrm I}^{(k)}}
		\left\| \begin{pmatrix} \underline w_{\mathrm I}^{(k)} \\ \underline v_\Gamma^{(k)} \end{pmatrix} \right\|_{\widehat D^{(k)}} 
		\eqsim \inf_{\underline w_{\mathrm I}^{(k)}}
		\left\| \begin{pmatrix} \underline w_{\mathrm I}^{(k)} \\ \underline v_\Gamma^{(k)} \end{pmatrix} \right\|_{A^{(k)}} 
		= 
		\| \underline v_\Gamma^{(k)} \|_{S^{(k)}}
	\end{align*}
	holds for all vectors $\underline v_\Gamma^{(k)}$ and for all $k=1,\ldots,K$.
	This shows $S \eqsim \widehat{S}_D$,
	which immediately implies
	$
	{M}_\mathrm{sD} = B_\Gamma \mathcal{D}^{-1}{S}\mathcal{D}^{-1}B_\Gamma^\top
	\eqsim
	B_\Gamma\mathcal{D}^{-1}\widehat{S}_D\mathcal{D}^{-1}B_\Gamma^\top = \widehat{M}_\mathrm{sD}.
	$
	%	In ${M}_\mathrm{sD}$ the Dirichlet problem in the Schur complement ${S}$ is solved exactly. Analogous arguments as in the previous lemma and Theorem~\ref{th:equivalent A} finish the proof. 
\end{proof}
%Let $\kappa_0(\mathcal{M})$ be the quotient of the largest and the smallest positive eigenvalues of the matrix $\mathcal{M} \in \mathbb{R}^{n\times n}, n\in \mathbb{N}$.
Collecting all the results from the lemmas and theorems above we have the following condition number estimates for the system~\eqref{eq:star}, preconditioned with~$\mathcal{P}$, and the
system~\eqref{eq:basis computation}, preconditioned with~$(\widetilde{D}^{(k)})^{-1}$.
\begin{theorem}\label{thrm:fin}
	The following condition number estimate holds:
	\[
	\kappa(\mathcal{P} \mathcal{A}) \lesssim 
	\left(
	1 + \max_{k=1, \dots, K} \log \frac{H_k}{h_k}
	\right)^2,
	\]
	%	and
	%	\[
	%			\kappa\big(
	%				(\widetilde{D}^{(k)})^{-1} (A^{(k)} + (\underline{g}^{(k)})^\top \underline{g}^{(k)})
	%			\big)\lesssim 1
	%			\quad\mbox{for all}\quad 
	%			k=1,\ldots,K,
	%	\]
	where $\kappa(Z):=\rho(Z)\rho(Z^{-1})$ is the condition number and
	$\rho(Z)$ is the spectral radius.
\end{theorem}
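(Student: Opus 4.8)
The plan is to reduce the condition number bound for~\eqref{eq:star} to the spectral facts already collected for the individual blocks and to the analysis of the exact IETI-DP scheme in~\cite{SchneckenleitnerTakacs:2020}. Regard~\eqref{eq:star} as a $2\times 2$ saddle-point system $\mathcal A=\begin{pmatrix} K & \mathcal B^\top \\ \mathcal B & 0\end{pmatrix}$ with primal block $K:=\mathrm{diag}(A_{\Delta\Delta},\Psi^\top A\Psi)$ and constraint matrix $\mathcal B:=\begin{pmatrix} B_\Delta & B\Psi\end{pmatrix}$, so that $\mathcal P=\mathrm{diag}(\mathcal P_K,\widehat M_{\mathrm{sD}})$ with $\mathcal P_K:=\mathrm{diag}(P,(\Psi^\top A\Psi)^{-1})$. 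The abstract ingredient is the standard spectral theory of block-diagonally preconditioned symmetric indefinite systems: if $K$ is symmetric positive definite, $\mathcal B$ has full row rank, and $\mathcal P_K$ and $\widehat M_{\mathrm{sD}}$ are symmetric positive definite, then the eigenvalues of $\mathcal P\mathcal A$ lie in one negative and one positive interval, each bounded away from $0$ and from $\infty$ by quantities depending only on the extreme eigenvalues of $\mathcal P_K K$ and of $\widehat M_{\mathrm{sD}}\widehat F$, where $\widehat F:=\mathcal B\mathcal P_K\mathcal B^\top=B_\Delta P B_\Delta^\top+B\Psi(\Psi^\top A\Psi)^{-1}\Psi^\top B^\top$ is the Schur complement assembled from the preconditioner block. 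Hence it suffices to control $\kappa(\mathcal P_K K)$, $\lambda_{\max}(\widehat M_{\mathrm{sD}}\widehat F)$ and $\lambda_{\min}(\widehat M_{\mathrm{sD}}\widehat F)^{-1}$.

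The primal part is immediate. By~\eqref{eq:combined} we have $P^{(k)}\eqsim(A_{\Delta\Delta}^{(k)})^{-1}$ for every $k$, and since $P=\mathrm{diag}(P^{(1)},\dots,P^{(K)})$ and $A_{\Delta\Delta}=\mathrm{diag}(A_{\Delta\Delta}^{(1)},\dots,A_{\Delta\Delta}^{(K)})$ are block-diagonal over the patches, this gives $P\eqsim A_{\Delta\Delta}^{-1}$; together with the exact primal block, $\mathcal P_K\eqsim K^{-1}$, so $\kappa(\mathcal P_K K)\lesssim 1$. The same equivalence shows $\widehat F\eqsim F$, which is exactly Lemma~\ref{lem:F equivalence}, whose proof moreover supplies the algebraic identity rewriting the Schur complement $F$ from~\cite{SchneckenleitnerTakacs:2020} as $B_\Delta A_{\Delta\Delta}^{-1}B_\Delta^\top+B\Psi(\Psi^\top A\Psi)^{-1}\Psi^\top B^\top$.

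For the dual part we chain equivalences. Lemma~\ref{lem:F equivalence} gives $\widehat F\eqsim F$, and Lemma~\ref{lem:Prec equivalence} gives $\widehat M_{\mathrm{sD}}\eqsim M_{\mathrm{sD}}$, both with equivalence constants of the admitted type. The analysis of the exact IETI-DP method in~\cite{SchneckenleitnerTakacs:2020} yields $\lambda_{\max}(M_{\mathrm{sD}}F)\lesssim 1$ and $\lambda_{\min}(M_{\mathrm{sD}}F)\gtrsim\big(1+\max_{k}\log(H_k/h_k)\big)^{-2}$, that is, $\kappa(M_{\mathrm{sD}}F)\lesssim\big(1+\max_k\log(H_k/h_k)\big)^2$. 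Transporting these one-sided bounds through the two spectral equivalences gives $\lambda_{\max}(\widehat M_{\mathrm{sD}}\widehat F)\lesssim 1$ and $\lambda_{\min}(\widehat M_{\mathrm{sD}}\widehat F)\gtrsim\big(1+\max_k\log(H_k/h_k)\big)^{-2}$.

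Finally, inserting $\kappa(\mathcal P_K K)\lesssim 1$ together with these estimates into the abstract saddle-point bound yields $\kappa(\mathcal P\mathcal A)\lesssim\big(1+\max_k\log(H_k/h_k)\big)^2$, which is the claim; the analogous estimate for the PCG solver applied to~\eqref{eq:basis computation} follows even more directly from $P^{(k)}\eqsim(A_{\Delta\Delta}^{(k)})^{-1}$. I expect the one real point of care to be the abstract step: one must check its hypotheses — symmetric positive definiteness of $A_{\Delta\Delta}$ and of $\Psi^\top A\Psi$, which follow from~\eqref{eq:patchwise equivalent}, \eqref{eq:pre combined} and the equivalence of~\eqref{eq:star} with the original well-posed problem, and full row rank of $\mathcal B$ — and one must see why the logarithmic factor is not amplified: it enters only through $\lambda_{\min}(\widehat M_{\mathrm{sD}}\widehat F)^{-1}$, with $\lambda_{\max}(\widehat M_{\mathrm{sD}}\widehat F)\lesssim 1$, so the preconditioned saddle-point operator inherits exactly the power of the logarithm present in $\kappa(M_{\mathrm{sD}}F)$.
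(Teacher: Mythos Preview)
Your proof is correct and follows essentially the same approach as the paper's: both reduce to the abstract saddle-point/Brezzi theory using~\eqref{eq:combined} for the primal block and Lemmas~\ref{lem:F equivalence} and~\ref{lem:Prec equivalence} together with \cite[Theorem~4.1]{SchneckenleitnerTakacs:2020} for the dual block. One minor slip: in the cited result the logarithmic factor appears in $\lambda_{\max}(M_{\mathrm{sD}}F)$ with $\lambda_{\min}(M_{\mathrm{sD}}F)\ge 1$, not the other way around as you wrote, though this does not affect the resulting condition number bound.
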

\begin{proof}
	%	Using Brezzi's theorem, cf. \cite{Brezzi:1974}, we obtain
	%	\begin{equation}\label{eq:thrm:fin}
	%	\kappa( \mathcal{P} \mathcal{A} )
	%	\lesssim
	%	\max\{
	%	\kappa(P A_{\Delta \Delta}),1,
	%	\kappa( \widehat M_{\mathrm{sD}} \widehat F
	%	)
	%	\}.
	%	\end{equation}	
	From \cite[Theorem 4.1]{SchneckenleitnerTakacs:2020}, we know that 
	\[
	\sigma(M_\mathrm{sD} F) \subseteq
	\left[
	1, \left(1 + \max_{k=1, \dots, K} \log \frac{H_k}{h_k}\right)^2 \overline{\sigma}_1
	\right],
	\]
	with $\overline{\sigma}_1>0$, where $\sigma(Z)$ denotes the set of spectral values of the matrix $Z$. 
	(Since in the present paper, we do not discuss the dependence on
	$p$, we have dropped that parameter in the estimate.)
	Using Lemmas~\ref{lem:F equivalence} and \ref{lem:Prec equivalence}, we obtain
	\begin{equation}\label{eq:thrm:fin2}
		\sigma(\widehat M_\mathrm{sD} \widehat F) \subseteq 
		\left[
		\underline{\sigma}_2, \left(1 + \max_{k=1, \dots, K} \log \frac{H_k}{h_k}\right)^2 \overline{\sigma}_2
		\right],
	\end{equation}
	where $\underline{\sigma}_2, \overline{\sigma}_2>0$. The constants $\overline{\sigma}_1, \underline{\sigma}_2, \overline{\sigma}_2, \underline{\sigma}_3, \overline{\sigma}_3$ are independent of the Assumptions~\ref{ass:common element},~\ref{ass:non-singularity} and~\ref{ass:quasi-uniformity}.
	Equation~\eqref{eq:combined} yields $\sigma(P A_{\Delta \Delta})\subseteq [\underline{\sigma}_3, \overline{\sigma}_3]$, with $\underline{\sigma}_3, \overline{\sigma}_3>0$, which finishes the proof
	together with~\eqref{eq:thrm:fin2} in combination with Brezzi's theorem, cf.~\cite{Brezzi:1974}. 
	%of the first estimate. 
	%The second estimate follows immediately
	%from~\eqref{eq:combined} and the definition of $\underline g^{(k)}$.
\end{proof}

\begin{remark}
	\label{rem:no p-robustness}
	Note that in previous papers~\cite{SchneckenleitnerTakacs:2019,SchneckenleitnerTakacs:2020,SchneckenleitnerTakacs:2022}, the given bound for the condition number was explicit in the spline degree~$p$, precisely it was shown that the condition number grows not faster than $p(\log p)^2$. The numerical experiments have shown a linear growth in some cases, otherwise a sub-linear growth.
	That result cannot be transferred to the inexact solvers considered in this paper since the analysis presented in Lemma~\ref{lem:D equivalence} is not robust in $p$:
	In~\eqref{eq:lem:D equivalence}, we use that the penalty parameter $\delta$ is bounded
	uniformly, however it has to be chosen like $\delta\sim p^2$ in order to guarantee that the
	SIPG discretization is well-posed, cf.~\cite[Theorem~3.3]{Takacs:2019}.
\end{remark}

\section{Numerical results}
\label{sec:5}
In this section, we show numerical results to demonstrate the performance of the IETI-DP method proposed in Section~\ref{sec:4} for dG discretizations. We consider the following model problem: Find~$u \in H^1_0(\Omega)$ such that 
\begin{align*}
	-\Delta u &= 2\pi^2 \sin(\pi x) \sin(\pi y) &&\text{for}\quad (x,y) \in \Omega.
\end{align*}
The geometry shown in Figure~\ref{fig:computational domain} serves as a computational domain $\Omega$ on which we approximately solve our model problem. This domain is an approximated quarter annulus consisting of $8 \times 4 = 32$ patches, each of whom is parameterized by a B-spline mapping of degree 2. 
The set of patches is partitioned into three subsets, characterized by different colors as shown in the picture: green, red, and grey.

\begin{figure}[h]
	\centering
	\includegraphics[scale=0.3]{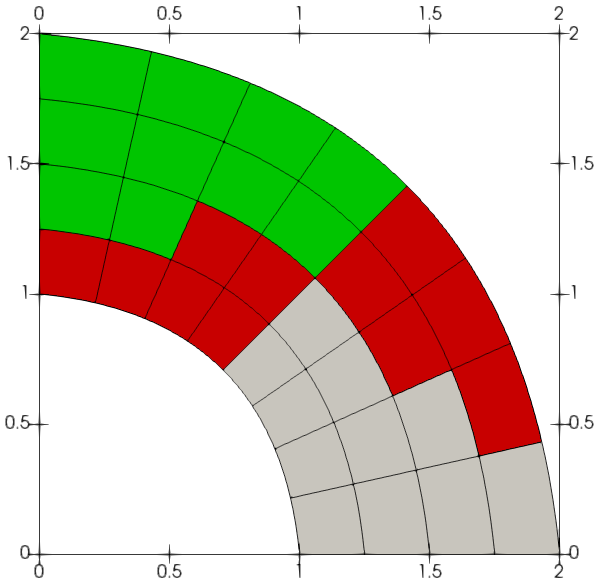}
	\caption{Computational domain and the decomposition into patches}
	\label{fig:computational domain}
\end{figure}
The numerical experiments are carried out with B-splines of maximum smoothness within each of the patches $\Omega^{(k)}$. 
The parameter $r$ is used to indicate the refinement level of the discretization space. 
The coarsest space, corresponding to $r = 0$, only consists of global polynomial functions on each patch. The subsequent discretizations $r=1,2,3,\dots$ are obtained as follows. First, we do $r$ dyadic refinements on all patches. Then, we uniformly refine the grey colored patches once more. The polynomial degrees for the discretization bases on the red patches are set to $p+1$ while on the other patches (green and grey) the degrees are set to $p$. 
In all cases, we consider splines of maximum smoothness within the patches, i.e.,
in $C^{p-1}$ and $C^p$, respectively. This is done in order to also obtain interfaces where neither
of the two adjacent trace spaces is a subspace of the other one.

\subsection{Setup and solving times of different solvers}
In this subsection, we report the setup and solving times of different IETI-DP formulations to get to the respective IETI-DP systems and to solve them. We compare the proposed method from this paper with a saddle point IETI-DP method using MINRES for the overall problem where we use sparse LU solvers for the local matrices 
\[
\mathcal{A}_\mathrm{C}^{(k)} := 
\begin{pmatrix}
	A^{(k)} & (C^{(k)})^{\top} \\
	C^{(k)}
\end{pmatrix}
\]
and sparse Cholesky factorizations for the matrices
$
A^{(k)}_\mathrm{II}
$
for all $k=1,\dots,K$, where $C^{(k)}$ denotes the matrix that selects the primal degrees of freedom of patch $\Omega^{(k)}$. The matrices $\mathcal{A}_\mathrm{C}^{(k)}$ and $A^{(k)}_\mathrm{II}$ are factorized in advance and used for the computation of the primal basis $\Psi$ and for preconditioning. The use of $\mathcal{A}_\mathrm{C}^{(k)}$ is the more classical approach in IETI-DP methods; we refer to~\cite{SchneckenleitnerTakacs:2020} for more details. We compute the LU and Cholesky factorizations using the Eigen library\footnote{\url{https://eigen.tuxfamily.org/}}. 
Furthermore, we compare the saddle point IETI-DP methods with the more standard IETI-DP method introduced and analyzed in~\cite{SchneckenleitnerTakacs:2020} that is based on a Schur complement formulation for which a PCG solver is utilized. 
Moreover, we consider the inexact IETI-DP method if we replace the preconditioner $P$ by $\mathcal{R}^{(\nu)}(P,A):=(I-(I-PA)^{\nu})A^{-1}$, $\nu \in \mathbb{N}$ which we realize as a Richardson scheme.
For a fair comparison between the methods we use only vertex values as primal degrees for all experiments. 
We indicate in the upcoming tables the proposed method from this paper with MFD, the inexact method with preconditioner $\mathcal{R}^{(2)}(P,A)$ by MFD-2, the saddle point IETI-DP method with local sparse solvers with MLU and the Schur complement based IETI-DP formulation with CGLU.  
All experiments have been carried out with the C++ library G+Smo~\cite{gismoweb}. The timings have been recorded on Radon1\footnote{\url{https://www.ricam.oeaw.ac.at/hpc/}}.

%We start with the assembling times for the stiffness matrices $A^{(k)}$, $k = 1,\dots,K$. The assembly of the stiffness matrices is independent of the chosen solver type. The required timings for different spline degrees $p$ and refinement levels $r$ is given in Table~\ref{tab:stiffness assembly}. 
%\begin{table}[thp]
%	\newcolumntype{L}[1]{>{\raggedleft\arraybackslash\hspace{-1em}}m{#1}}
%	\centering
%	\renewcommand{\arraystretch}{1.25}
%	\begin{tabular}{l|L{3em}|L{3em}|L{3em}|L{3em}|L{3em}|L{3em}}
%		\toprule
%		\multicolumn{1}{l}{$r\;\;\diagdown\;\;p$\hspace{-1.8em}\;}
%		& \multicolumn{1}{c|}{2}
%		& \multicolumn{1}{c|}{3}
%		& \multicolumn{1}{c|}{4}
%		& \multicolumn{1}{c|}{5}
%		& \multicolumn{1}{c|}{6}
%		& \multicolumn{1}{c}{7} \\
%		\midrule 
%		$6$  & $10.5$ & $20.6$ & $39.6$ & $77.8$ & $143.2$ & $235.0$ \\
%		$7$  & $41.6$ & $82.5$ & $159.0$ & $309.0$ & $572.0$ & $937.0$ \\
%		$8$  & $167.0$ & $330.8$ & $646.4$ & $1262.0$ & $2298.0$ & $3798.0$ \\
%		\bottomrule
%	\end{tabular}
%	\captionof{table}{Assemble times of the stiffness matrices $A^{(k)}$
%		\label{tab:stiffness assembly}}
%\end{table}

In the following, we show setup and application times of different quantities together with the overall solving time and outer iterations. We present the time required to compute the primal basis $\Psi$ and the times to set up the local preconditioners and the inverses in the scaled Dirichlet preconditioner. We use the symbol $\Psi$ in the tables to denote the computation time of $\Psi^{}$. Moreover, $\Upsilon_{S}$ and $\Theta_{S}$ denote the accumulated setup times of $P^{(k)}$ and $(D_{\mathrm{II}}^{(k)})^{-1}$ for MFD and MFD-2, and $(\mathcal{A}_\mathrm{C}^{(k)})^{-1}$, $(A_{\mathrm{II}}^{(k)})^{-1}$ for MLU and CGLU for all $k = 1,\dots, K$. Once the respective IETI-DP systems are set up we solve them. We report the accumulated application times of the respective preconditioners, the overall solving time and the outer iterations. We use in the forthcoming tables the abbreviations $\Upsilon_{A}$ and $\Theta_{A}$ to denote the accumulated application times of $P^{(k)}$ and $(D_{\mathrm{II}}^{(k)})^{-1}$ for MFD and MFD-2, and  $(\mathcal{A}_\mathrm{C}^{(k)})^{-1}$, $(A_{\mathrm{II}}^{(k)})^{-1}$ for MLU and CGLU, respectively for all $k = 1,\dots, K$. The overall solving times are denoted by solving and the outer iterations are abbreviated with ``it.''. Furthermore, we provide the total time required for the setup and solving the system by total.

\begin{table}[thp]
	\newcolumntype{L}[1]{>{\raggedleft\arraybackslash\hspace{-1em}}m{#1}}
	\centering
	\renewcommand{\arraystretch}{1.25}
	\begin{tabular}{r|c|L{3.em}|L{3.em}|L{3.em}|L{3.em}|L{3.em}|L{3.em}|L{3.em}|L{2.em}}
		\toprule
		\multicolumn{1}{c|}{\;\footnotesize $ $ }
		&\multicolumn{1}{c|}{\;\footnotesize $r$ \;}
		& \multicolumn{1}{c|}{\footnotesize $\Psi$}
		& \multicolumn{1}{c|}{\footnotesize $\Upsilon_S$}
		& \multicolumn{1}{c|}{\footnotesize $\Theta_{S}$}
		& \multicolumn{1}{c|}{\footnotesize $\Upsilon_A$} 
		& \multicolumn{1}{c|}{\footnotesize $\Theta_{A}$}
		& \multicolumn{1}{c|}{\footnotesize solving}
		& \multicolumn{1}{c|}{\footnotesize total}
		& \multicolumn{1}{c}{\footnotesize it.} \\
		\midrule
		\midrule
		MFD & $6$   
		& $18.1$ & $0.7$ & $0.1$ & $12.8$ & $5.9$ & $35.8 $ & $54.7$  & $505$\\
		MFD-2 & $ $   
		& $29.1$ & $1.1$ & $0.1$ & $24.5$ & $4.0$ & $39.5$ & $69.8$  & $372$\\
		MLU &  $ $ 
		& $2.0$ & $19.4$ & $2.6$ & $15.8$ & $4.9$ & $22.1$ & $46.1$ & $51$\\
		CGLU &  $ $ 
		& $2.0$ & $19.2$ & $2.6$ & $7.6$ & $2.1$ & $9.2$ & $ 33.3$ & $22$ \\
		\midrule
		MFD & $7$   
		& $88.9$ & $5.8$ & $0.6$ & $74.4$ & $45.3$ & $178.2$ & $273.5$ & $547$\\
		MFD-2 & $ $   
		& $134.1$ & $7.5$ & $0.6$ & $123.7$ & $31.9$ & $191.1$ & $333.3$ & $392$\\
		MLU &  $ $ 
		& $9.5$ & $111.3$ & $24.8$ & $85.0$ & $27.3$ & $117.3$ & $262.8$ & $57$\\
		CGLU &  $ $ 
		& $9.5$ & $110.8$ & $25.1$ & $39.6$ & $11.3$ & $48.3$ & $193.6$ & $24$ \\
		\midrule
		MFD & $8$   
		& $494.3$ & $80.4$ & $3.9$ & $460.1$ & $364.2$ & $1054.7$ & $1633.3$ & $578$\\
		MFD-2 & $ $   
		& $740.9$ & $90.6$ & $3.9$ & $744.2$ & $263.7$ & $1149.7$ & $1985.1$ & $417$\\
		MLU &  $ $ 
		& $46.2$ & $751.1$ & $285.0$ & $419.3$ & $137.4$ & $576.5$ & $1858.9$ & $59$ \\
		CGLU &  $ $ 
		& $46.1$ & $749.5$ & $284.5$ & $188.2$ & $54.8$ & $229.7$ & $1309.8$ & $24$ \\
		\bottomrule
	\end{tabular}
	\captionof{table}{Performance of the solvers; $p = 2$
		\label{tab:Times, p = 2}}
\end{table}

\begin{table}[thp]
	\newcolumntype{L}[1]{>{\raggedleft\arraybackslash\hspace{-1em}}m{#1}}
	\centering
	\renewcommand{\arraystretch}{1.25}
	\begin{tabular}{r|c|L{3.em}|L{3.0em}|L{3.em}|L{3.em}|L{3.em}|L{3.em}|L{3.em}|L{2.em}}
		\toprule
		\multicolumn{1}{c|}{\;\footnotesize $ $ }
		&\multicolumn{1}{c|}{\;\footnotesize $r$ \;}
		& \multicolumn{1}{c|}{\footnotesize $\Psi$}
		& \multicolumn{1}{c|}{\footnotesize $\Upsilon_{S}$}
		& \multicolumn{1}{c|}{\footnotesize $\Theta_{S}$}
		& \multicolumn{1}{c|}{\footnotesize $\Upsilon_{A}$} 
		& \multicolumn{1}{c|}{\footnotesize $\Theta_{A}$}
		& \multicolumn{1}{c|}{\footnotesize solving} 
		& \multicolumn{1}{c|}{\footnotesize total}
		& \multicolumn{1}{c}{\footnotesize it.}\\
		\midrule
		\midrule
		MFD & $6$   
		& $24.3$ & $0.7$ & $0.1$ & $ 15.0$ & $6.8$ & $50.2$ & $75.3$ & $573$ \\
		MFD-2 & $ $   
		& $40.5$ & $1.2$ & $0.1$ & $33.4$ & $5.0$ & $55.8$ & $97.6$ & $422$\\
		MLU &  $ $ 
		& $ 3.1$ & $45.6$ & $9.6$ & $26.5$ & $10.7$ & $39.3$ & $97.5$ & $55$ \\
		CGLU &  $ $ 
		& $3.0$ & $45.0$ & $9.6$ & $11.8$ & $4.2$ & $15.2$ & $72.9$ & $22$\\
		\midrule
		MFD & $7$   
		& $115.3$ & $4.2$ & $0.6$ & $81.5$ & $48.70$ & $227.1$ & $347.2$ & $605$\\
		MFD-2 & $ $   
		& $178.7$ & $6.4$ & $0.6$ & $163.5$ & $36.7$ & $260.1$ & $445.8$ & $452$\\
		MLU &  $ $
		& $20.0$ & $433.5$ & $135.5$ & $179.8$ & $61.41$ & $248.6$ & $837.5$ & $59$ \\
		CGLU &  $ $ 
		& $20.0$ & $432.0$ & $135.2$ & $80.9$ & $24.58$ & $99.8$ & $687.0$ & $24$ \\
		\midrule
		MFD & $8$   
		& $613.8$ & $50.9$ & $3.9$ & $529.2$ & $397.7$ & $1327.1$ & $1995.8$ & $659$\\
		MFD-2 & $ $   
		& $928.8$ & $62.3$ & $3.9$ & $937.6$ & $289.7$ & $1467.0$ & $2462.0$ & $478$\\
		MLU &  $ $ 
		& $114.3$ & $3246.3$ & $1578.9$ & $978.4$ & $329.4$ & $1337.3$ & $6276.9$ & $61$\\
		CGLU &  $ $ 
		& $114.3$ & $3237.4$ & $1570.3$ & $457.1$ & $137.7$ & $564.0$ & $5476.0$ & $26$ \\
		\bottomrule
	\end{tabular}
	\captionof{table}{Performance of the solvers; $p = 3$
		\label{tab:Times, p = 3}}
\end{table}

In the Tables~\ref{tab:Times, p = 2},~\ref{tab:Times, p = 3},~\ref{tab:Times, p = 4},~\ref{tab:Times, p = 5},~\ref{tab:Times, p = 6},~\ref{tab:Times, p = 7} and \ref{tab:Times, p = 8}, we present the setup and solving times for the mentioned IETI-DP formulations for the spline degrees $p = 2$ to $p = 8$, respectively. We see the advantage of sparse direct solvers for problems with multiple right-hand side vectors. The primal basis computation is quite faster compared to the use of an iterative solver. The setup of the local preconditioners $P^{(k)}$ requires less time compared to the factorization of $\mathcal{A}_{\mathrm{C}}^{(k)}$, in particular if we use a large spline degree $p$. Also the setup of $(D_{\mathrm{II}}^{(k)})^{-1}$ is by factors faster than to factorize the matrices $A_{\mathrm{II}}^{(k)}$. We observe that the application of the inexact solvers leads to significantly larger numbers of iterations, which in general leads to more time spent in the solving phase. Regarding the total time, MFD is the fastest method in almost all cases except in those with lower spline degree $p$ and coarser grids (where the exact local solves dominate). We observe from the tables that the application of $\mathcal{R}^{(2)}(P,A)$ is more expensive that the application of $P$. The smaller number of outer iterations for MFD-2 only reduces the computational costs slightly, compared to MFD.  
We also observe from the tables that the memory requirement for sparse direct solvers limits the problem size we are able to solve with the IETI-DP formulations MLU and CGLU. In fact, for spline degrees $p = 7$ and $p=8$ and refinement level $r=8$, MLU and CGLU run out of memory while MFD and MFD-2 deliver a solution for these larger problems. 

\begin{table}[thp]
	\newcolumntype{L}[1]{>{\raggedleft\arraybackslash\hspace{-1em}}m{#1}}
	\centering
	\renewcommand{\arraystretch}{1.25}
	\begin{tabular}{r|c|L{3.em}|L{3.em}|L{3.em}|L{3.em}|L{3.em}|L{3.em}|L{3.em}|L{2.em}}
		\toprule
		\multicolumn{1}{c|}{\;\footnotesize $ $ }
		&\multicolumn{1}{c|}{\;\footnotesize $r$ \;}
		& \multicolumn{1}{c|}{\footnotesize $\Psi$}
		& \multicolumn{1}{c|}{\footnotesize $\Upsilon_{S}$}
		& \multicolumn{1}{c|}{\footnotesize $\Theta_{S}$}
		& \multicolumn{1}{c|}{\footnotesize $\Upsilon_{A}$} 
		& \multicolumn{1}{c|}{\footnotesize $\Theta_{A}$}
		& \multicolumn{1}{c|}{\footnotesize solving}
		& \multicolumn{1}{c|}{\footnotesize total} 
		& \multicolumn{1}{c}{\footnotesize it.}\\
		\midrule
		\midrule
		MFD & $6$   
		& $34.6$ & $0.9$ & $0.2$ & $18.2$ & $8.4$ & $74.6$ & $110.1$ & $681$ \\
		MFD-2 & $ $   
		& $59.8$ & $1.50$ & $0.2$ & $48.3$ & $6.3$ & $83.8$ & $145.3$  & $508$\\
		MLU &  $ $ 
		& $4.3$ & $91.1$ & $15.4$ & $36.4$ & $14.3$ & $53.6$ & $164.4$ & $55$\\
		CGLU &  $ $ 
		& $4.3$ & $90.2$ & $15.3$ & $17.6$ & $6.2$ & $22.6$ & $132.4$ & $24$\\
		\midrule
		MFD & $7$   
		& $157.3$ & $5.3$ & $0.6$ & $102.5$ & $61.1$ & $339.0$ & $502.2$ & $729$\\
		MFD-2 & $ $   
		& $256.9$ & $8.02$ & $0.6$ & $46.1$ & $233.1$ & $385.2$ & $650.8$ & $556$\\
		MLU &  $ $ 
		& $27.7$ & $858.0$ & $188.2$ & $240.7$ & $78.4$ & $329.6$ & $1403.4$ & $59$ \\
		CGLU &  $ $ 
		& $27.7$ & $861.3$ & $188.2$ & $112.2$ & $32.7$ & $137.2$ & $1214.3$ & $25$\\
		\midrule
		MFD & $8$   
		& $796.6$ & $62.2$ & $3.9$ & $624.4$ & $472.9$ & $1785.2$ & $2647.8$ & $771$\\
		MFD-2 & $ $   
		& $1261.5$ & $76.7$ & $3.9$ & $1267.9$ & $351.2$ & $2036.1$ & $3378.2$ & $575$\\
		MLU &  $ $ 
		& $194.6$ & $10301.0$ & $1634.1$ & $1723.9$ & $395.2$ & $2165.6$ & $14295.3$ & $65$\\
		CGLU &  $ $ 
		& $194.54$ & $10284.0$ & $1625.01$ & $755.84$ & $155.1$ & $859.8$ & $12963.3$ & $26$\\
		\bottomrule
	\end{tabular}
	\captionof{table}{Performance of the solvers; $p = 4$
		\label{tab:Times, p = 4}}
\end{table}

\begin{table}[thp]
	\newcolumntype{L}[1]{>{\raggedleft\arraybackslash\hspace{-1em}}m{#1}}
	\centering
	\renewcommand{\arraystretch}{1.25}
	\begin{tabular}{r|c|L{3.em}|L{3.em}|L{3.em}|L{3.em}|L{3.em}|L{3.em}|L{3.em}|L{2.em}}
		\toprule
		\multicolumn{1}{c|}{\;\footnotesize $ $ }
		&\multicolumn{1}{c|}{\;\footnotesize $r$ \;}
		& \multicolumn{1}{c|}{\footnotesize $\Psi$}
		& \multicolumn{1}{c|}{\footnotesize $\Upsilon_{S}$}
		& \multicolumn{1}{c|}{\footnotesize $\Theta_{S}$}
		& \multicolumn{1}{c|}{\footnotesize $\Upsilon_{A}$} 
		& \multicolumn{1}{c|}{\footnotesize $\Theta_{A}$}
		& \multicolumn{1}{c|}{\footnotesize solving} 
		& \multicolumn{1}{c|}{\footnotesize total}
		& \multicolumn{1}{c}{\footnotesize it. } \\
		\midrule
		\midrule
		MFD & $6$   
		& $49.6$ & $1.0$ & $0.2$ & $23.6$ & $11.0$ & $115.0$ & $165.8$ & $835$\\
		MFD-2 & $ $   
		& $86.4$ & $1.8$ & $0.2$ & $71.6$ & $8.1$ & $127.4$ & $215.7$ & $628$\\
		MLU &  $ $ 
		& $6.1$ & $180.4$ & $30.5$ & $51.9$ & $20.2$ & $76.1$ & $293.1$ & $57$ \\
		CGLU &  $ $ 
		& $6.1$ & $178.6$ & $30.4$ & $25.4$ & $9.2$ & $32.9$ & $248.1$ & $24$ \\
		\midrule
		MFD & $7$   
		& $219.6$ & $6.2$ & $0.6$ & $125.0$ & $75.1$ & $492.1$ & $718.6$ & $888$\\
		MFD-2 & $ $   
		& $371.0$ & $9.7$ & $0.6$ & $327.9$ & $56.7$ & $557.0$ & $938.3$ & $668$\\
		MLU &  $ $ 
		& $35.1$ & $1389.7$ & $328.3$ & $307.8$ & $106.3$ & $428.8$ & $2181.9$ & $61$\\
		CGLU &  $ $ 
		& $35.1$ & $1383.9$ & $328.5$ & $143.7$ & $44.6$ & $178.8$ & $1926.4$ & $26$ \\
		\midrule
		MFD & $8$   
		& $1062.7$ & $74.5$ & $4.0$ & $784.0$ & $595.2$ & $2568.6$ & $3709.8$ & $967$\\
		MFD-2 & $ $   
		& $1787.9$ & $91.3$ & $4.0$ & $1725.5$ & $431.3$ & $2844.3$ & $4727.5$ & $699$\\
		MLU &  $ $ 
		& $263.2$ & $18354.5$ & $3562.5$ & $2372.9$ & $617.6$ & $3061.1$ & $25241.3$ & $65$\\
		CGLU &  $ $ 
		& $263.2$ & $18403.0$ & $3624.8$ & $1037.8$ & $236.2$ & $1205.7$ & $23496.7$ & $27$\\
		\bottomrule
	\end{tabular}
	\captionof{table}{Performance of the solvers; $p = 5$
		\label{tab:Times, p = 5}}
\end{table}

\begin{table}[thp]
	\newcolumntype{L}[1]{>{\raggedleft\arraybackslash\hspace{-1em}}m{#1}}
	\centering
	\renewcommand{\arraystretch}{1.25}
	\begin{tabular}{r|c|L{3.em}|L{3.em}|L{3.em}|L{3.em}|L{3.em}|L{3.em}|L{3.em}|L{2.em}}
		\toprule
		\multicolumn{1}{c|}{\;\footnotesize $ $ }
		&\multicolumn{1}{c|}{\;\footnotesize $r$ \;}
		& \multicolumn{1}{c|}{\footnotesize $\Psi$}
		& \multicolumn{1}{c|}{\footnotesize $\Upsilon_{S}$}
		& \multicolumn{1}{c|}{\footnotesize $\Theta_{S}$}
		& \multicolumn{1}{c|}{\footnotesize $\Upsilon_{A}$} 
		& \multicolumn{1}{c|}{\footnotesize $\Theta_{A}$}
		& \multicolumn{1}{c|}{\footnotesize solving} 
		& \multicolumn{1}{c|}{\footnotesize total}
		& \multicolumn{1}{c}{\footnotesize it. } \\
		\midrule
		\midrule
		MFD & $6$   
		& $ 70.6$ & $1.1$ & $0.2$ & $29.9$ & $13.6$ & $174.7$ & $256.6$ & $1067$\\
		MFD-2 & $ $   
		& $131.1$ & $2.1$ & $0.2$ & $110.0$ & $10.3$ & $199.0$ & $332.3$ & $800$\\
		MLU &  $ $ 
		& $7.6$ & $287.4$ & $57.8$ & $65.5$ & $27.5$ & $98.4$ & $451.3$ & $59$\\
		CGLU &  $ $ 
		& $7.6$ & $283.7$ & $57.5$ & $30.6$ & $11.5$ & $40.1$ & $389.0$ & $25$ \\
		\midrule
		MFD & $7$   
		& $310.3$ & $8.3$ & $0.65$ & $160.8$ & $95.4$ & $751.0$ & $1070.2$ & $1128$\\
		MFD-2 & $ $   
		& $554.7$ & $12.2$ & $0.7$ & $487.5$ & $71.1$ & $846.0$ & $1413.6$ & $845$\\
		MLU &  $ $ 
		& $49.2$ & $2654.4$ & $615.3$ & $445.5$ & $150.8$ & $616.8$ & $3935.6$ & $65$\\
		CGLU &  $ $ 
		& $49.2$ & $2644.3$ & $615.5$ & $195.2$ & $59.4$ & $241.6$ & $3550.5$ & $26$\\
		\midrule
		MFD & $8$   
		& $1458.1$ & $82.6$ & $3.9$ & $962.3$ & $721.5$ & $3598.5$ & $5143.1$ & $1172$\\
		MFD-2 & $ $   
		& $2544.3$ & $102.49$ & $3.9$ & $2480.5$ & $544.3$ & $4166.5$ & $6817.1$ & $886$\\
		MLU &  $ $ 
		& $317.8$ & $27693.7$ & $6599.9$ & $2896.1$ & $872.4$ & $3861.5$ & $38472.9$ & $69$\\
		CGLU &  $ $ 
		& $317.5$ & $27475.9$ & $6468.7$ & $1306.0$ & $371.4$ & $1537.0$ & $35799.1$ & $27$\\
		\bottomrule
	\end{tabular}
	\captionof{table}{Performance of the solvers; $p = 6$
		\label{tab:Times, p = 6}}
\end{table}

\begin{table}[thp]
	\newcolumntype{L}[1]{>{\raggedleft\arraybackslash\hspace{-1em}}m{#1}}
	\centering
	\renewcommand{\arraystretch}{1.25}
	\begin{tabular}{r|c|L{3.em}|L{3.em}|L{3.em}|L{3.em}|L{3.em}|L{3.em}|L{3.em}|L{2.em}}
		\toprule
		\multicolumn{1}{c|}{\;\footnotesize $ $ }
		&\multicolumn{1}{c|}{\;\footnotesize $r$ \;}
		& \multicolumn{1}{c|}{\footnotesize $\Psi$}
		& \multicolumn{1}{c|}{\footnotesize $\Upsilon_{S}$}
		& \multicolumn{1}{c|}{\footnotesize $\Theta_{S}$}
		& \multicolumn{1}{c|}{\footnotesize $\Upsilon_{A}$} 
		& \multicolumn{1}{c|}{\footnotesize $\Theta_{A}$}
		& \multicolumn{1}{c|}{\footnotesize solving}
		& \multicolumn{1}{c|}{\footnotesize total}
		& \multicolumn{1}{c}{\footnotesize it. } \\
		\midrule
		\midrule
		MFD & $6$   
		& $109.3$ & $1.1$ & $0.2$ & $41.9$ & $18.9$ & $288.5$ & $399.1$ & $1387$\\
		MFD-2 & $ $   
		& $208.2$ & $2.4$ & $0.2$ & $170.7$ & $13.9$ & $314.7$ & $525.5$ & $1043$\\
		MLU &  $ $ 
		& $8.89$ & $406.0$ & $82.3$ & $80.6$ & $35.7$ & $123.8$ & $620.9$ & $63$\\
		CGLU &  $ $ 
		& $8.9$ & $401.2$ & $81.7$ & $36.7$ & $14.5$ & $48.9$ & $540.6$ & $26$\\
		\midrule
		MFD & $7$   
		& $467.4$ & $5.9$ & $0.7$ & $204.4$ & $122.3$ & $1141.0$ & $1615.0$ & $1424$\\
		MFD-2 & $ $   
		& $858.4$ & $11.1$ & $0.7$ & $723.6$ & $92.2$ & $1279.0$ & $2149.2$ & $1068$\\
		MLU &  $ $ 
		& $62.2$ & $4579.7$ & $969.4$ & $574.1$ & $199.9$ & $800.9$ & $6412.3$ & $67$\\
		CGLU &  $ $ 
		& $62.2$ & $4565.6$ & $969.9$ & $260.5$ & $82.3$ & $326.6$ & $5924.2$ & $28$\\
		\midrule
		MFD & $8$   
		& $2149.1$ & $52.2$ & $4.2$ & $1254.1$ & $943.0$ & $5379.7$ & $7585.2$ & $1511$\\
		MFD-2 & $ $   
		& $ 3876.2$ & $75.7$ & $4.2$ & $3570.9$ & $700.7$ & $6126.4$ & $10082.5$ & $1123 $\\
		MLU &  $ $ 
		& \multicolumn{8}{c}{OoM} \\
		CGLU &  $ $ 
		& \multicolumn{8}{c}{OoM} \\
		\bottomrule
	\end{tabular}
	\captionof{table}{Performance of the solvers; $p = 7$
		\label{tab:Times, p = 7}}
\end{table}

\begin{table}[thp]
	\newcolumntype{L}[1]{>{\raggedleft\arraybackslash\hspace{-1em}}m{#1}}
	\centering
	\renewcommand{\arraystretch}{1.25}
	\begin{tabular}{r|c|L{3.em}|L{3.em}|L{3.em}|L{3.em}|L{3.em}|L{3.em}|L{3.em}|L{2.em}}
		\toprule
		\multicolumn{1}{c|}{\;\footnotesize $ $ }
		&\multicolumn{1}{c|}{\;\footnotesize $r$ \;}
		& \multicolumn{1}{c|}{\footnotesize $\Psi$}
		& \multicolumn{1}{c|}{\footnotesize $\Upsilon_{S}$}
		& \multicolumn{1}{c|}{\footnotesize $\Theta_{S}$}
		& \multicolumn{1}{c|}{\footnotesize $\Upsilon_{A}$} 
		& \multicolumn{1}{c|}{\footnotesize $\Theta_{A}$}
		& \multicolumn{1}{c|}{\footnotesize solving}
		& \multicolumn{1}{c|}{\footnotesize total}
		& \multicolumn{1}{c}{\footnotesize it. } \\
		\midrule
		\midrule
		MFD & $6$   
		& $254.7$ & $1.6$ & $0.2$ & $56.0$ & $26.2$ & $453.7$ & $710.2$ & $1831$ \\
		MFD-2 & $ $   
		& $494.6$ & $3.2$ & $0.2$ & $272.6$ & $19.6$ & $510.3$ & $1008.4$ & $1409$\\
		MLU &  $ $ 
		& $10.4$ & $563.9$ & $129.3$ & $108.7$ & $50.3$ & $169.5$ & $873.1$ & $73$\\
		CGLU &  $ $ 
		& $10.4$ & $557.7$ & $128.6$ & $48.3$ & $20.3$ & $66.1$ & $762.6$ & $30$\\
		\midrule
		MFD & $7$   
		& $1087.3$ & $7.9$ & $0.7$ & $277.8$ & $170.6$ & $1813.9$ & $2911.8$ & $1940$\\
		MFD-2 & $ $   
		& $2036.3$ & $14.1$ & $0.7$ & $1153.0$ & $129.7$ & $2080.6$ & $4131.7$ & $1471$\\
		MLU &  $ $ 
		& $78.9$ & $7550.6$ & $1449.7$ & $830.6$ & $282.8$ & $1153.1$ & $10232.3$ & $79$\\
		CGLU &  $ $ 
		& $78.9$ & $7530.6$ & $1449.8$ & $351.2$ & $109.6$ & $441.0$ & $9500.3$ & $31$\\
		\midrule
		MFD & $8$   
		& $5705.5$ & $62.0$ & $4.2$ & $1656.8$ & $1266.4$ & $6781.0$ & $12552.7$ & $2003$\\
		MFD-2 & $ $   
		& $9118.9$ & $89.4$ & $4.2$ & $5581.5$ & $976.5$ & $9749.3$ & $18961.7$ & $1548$\\
		MLU &  $ $ 
		& \multicolumn{8}{c}{OoM} \\
		CGLU &  $ $ 
		& \multicolumn{8}{c}{OoM} \\
		\bottomrule
	\end{tabular}
	\captionof{table}{Performance of the solvers; $p = 8$
		\label{tab:Times, p = 8}}
\end{table}

\subsection{Relative condition numbers of the local problems}

In Table~\ref{tab:localCondNumbers}, we present the relative condition numbers of the local problems, as estimated by the PCG algorithm. We investigate the condition numbers of $P^{(12)}A_{\Delta\Delta}^{(12)}$ since this is one of the generic patches that do not touch the Dirichlet boundary. Patch $\Omega^{(12)}$ is the gray patch that shares a vertex with a green patch. We observe that the condition number and number of iterations are basically robust in the grid size. However, we see a significant growth of the condition number with respect to the spline degree $p$, particularly for larger values of $p$. For $p\ge4$, the growth seems to be cubic in $p$. These findings seem to confirm the statement from Remark~\ref{rem:no p-robustness} that a $p$-robust analysis of the proposed approach is not possible.

\begin{table}[thp]
	\newcolumntype{L}[1]{>{\raggedleft\arraybackslash\hspace{-1em}}m{#1}}
	\centering
	\renewcommand{\arraystretch}{1.25}
	\begin{tabular}{c|L{3.em}|L{2.em}|L{3.em}|L{2.em}|L{3.em}|L{2.em}|L{3.em}|L{2.em}|L{3.em}|L{2.em}}
		\toprule
		\multicolumn{1}{c|}{\;\footnotesize $ $ }
		& \multicolumn{2}{c|}{\;\footnotesize $p=2$ }
		& \multicolumn{2}{c|}{\;\footnotesize $p=3$ }
		& \multicolumn{2}{c|}{\;\footnotesize $p=4$ }
		& \multicolumn{2}{c|}{\;\footnotesize $p=5$ }
		& \multicolumn{2}{c}{\;\footnotesize $p=6$ } \\
		\midrule
		\multicolumn{1}{c|}{\;\footnotesize $r$ \;}
		& \multicolumn{1}{c|}{\footnotesize $\kappa$}
		& \multicolumn{1}{c|}{\footnotesize it.}
		& \multicolumn{1}{c|}{\footnotesize $\kappa$}
		& \multicolumn{1}{c|}{\footnotesize it.} 
		& \multicolumn{1}{c|}{\footnotesize $\kappa$}
		& \multicolumn{1}{c|}{\footnotesize it.} 
		& \multicolumn{1}{c|}{\footnotesize $\kappa$}
		& \multicolumn{1}{c|}{\footnotesize it.} 
		& \multicolumn{1}{c|}{\footnotesize $\kappa$}
		& \multicolumn{1}{c}{\footnotesize it.}\\
		\midrule
		\midrule
		$6$ & 92.6 & 61
		& 98.8 & 63 & 122.8 & 70 & 222.8 & 82 & 438.7 & 98 \\
		%\midrule
		$7$ & 92.3 & 61
		& 102.0 & 63 & 125.2 & 70 & 225.4 & 83 & 427.5 & 97 \\
		%\midrule
		$8$ & 90.8 & 61
		& 101.9 & 63 & 123.9 & 70 & 230.1 & 83 & 422.0 & 98 \\
		\bottomrule
	\end{tabular}
	\captionof{table}{Condition numbers of $P^{(12)}A_{\Delta\Delta}^{(12)}$ estimated by PCG and number of PCG iterations to solve the corresponding problem to get a primal basis in the MFD method 
		\label{tab:localCondNumbers}}
\end{table}

\section{Conclusions}
\label{sec:6}
We have developed a IETI-DP method for dG discretizations that allows to incorporate the Fast Diagonalization method as inexact local solver. The local spaces in the IETI-DP algorithm for dG discretizations do not have tensor product structure that is required for FD. To circumvent this problem, we split a local function space into two function spaces that are orthogonal with respect to the localized dG-norm. One space has tensor product structure and the other one is a one-dimensional edge space. We have shown that the convergence bound with respect to $h$ we have developed in~\cite{SchneckenleitnerTakacs:2020} is maintained.
The presented method requires in almost all cases less time in total (setup and solving) and allows to reduce the memory footprint compared to the use of sparse direct solvers for the local problems. The numerical experiments indicate that MINRES is the preferable option to solve the resulting linear system. To obtain a fully iterative method, one has to find a suitable preconditioner for the coarse problem, which is left for future work.

Another future research direction is the extension of the proposed method to 3D problems. This might be very promising because the natural 3D extension of the local preconditioner $P^{(k)}$, and in particular its fundamental blocks $\widetilde{D}_1^{(k)}$ and $\widetilde{D}_2^{(k)}$, could be inverted very efficiently. Indeed, as discussed in \cite{SangalliTani:2016}, the Fast Diagonalization method used to invert $\widetilde{D}_1^{(k)}$ is even more appealing in 3D than in 2D. As for $\widetilde{D}_2^{(k)}$, in the present paper it is essentially a mass matrix on the patch edges, and can be effectively handled with a direct solver. In 3D, the corresponding mass matrix on the faces of the patch could be still inexpensively inverted thanks to its Kronecker structure. The non-trivial issue of a 3D extension is related to the choice of primal degrees of freedom in the IETI-DP framework. Indeed, it is known that simply taking the corners of each patch may lead to a worsening of the scalability. On the other hand, with the introduction of more suited primal degrees of freedom, e.g. face averages, it is not obvious how to maintain the tensor structure of the local problems, which is a key ingredient of the proposed approach.

\mbox{}\\[-2em]

\textbf{Acknowledgments.}
The third author was supported by the Austrian Science Fund (FWF): S117 and 
W1214-04. The fourth author has received support from the Austrian Science Fund (FWF): P31048. The first and second authors were partially supported by the Italian Ministry of Education, University and Research (MIUR) through the  ``Dipartimenti di Eccellenza Program (2018-2022) -
Dept. of Mathematics, University of Pavia''. These supports are
gratefully acknowledged. The first, second and fifth authors
are members of the Gruppo Nazionale Calcolo Scientifico-Istituto
Nazionale di Alta Matematica (GNCS-INDAM).

%%%%%%%%%%%%%%%%%%%%%%%%%%%%%%%%%%%%%%%%%%%%%%%%%%%%%%%%%%%%%%%%%%%%%%%%%%%%%%%%
%
% bibliography
%
\bibliography{literature}

\begin{thebibliography}{10}

\bibitem{Arnold:1982}
D.~Arnold.
\newblock An interior penalty finite element method with discontinuous
  elements.
\newblock {\em SIAM J. Numer. Anal.}, 19(4):742 -- 760, 1982.

\bibitem{BadiaMartin:2015}
S.~Badia, A.~F. Mart\'in, and J.~Principe.
\newblock On the scalability of inexact balancing domain decomposition by
  constraints with overlapped coarse/fine corrections.
\newblock {\em Parallel Comput.}, 50:1 -- 24, 2015.

\bibitem{BozyMontardiniSangalliTani:2020}
M.~Bosy, M.~Montardini, G.~Sangalli, and M.~Tani.
\newblock A domain decomposition method for isogeometric multi-patch problems
  with inexact local solvers.
\newblock {\em Comput. Math. with Appl.}, 80(11):2604 -- 2621, 2020.

\bibitem{Brezzi:1974}
F.~Brezzi.
\newblock {On the existence, uniqueness and approximation of saddle-point
  problems arising from lagrangian multipliers}.
\newblock {\em ESAIM: Math. Model. Numer. Anal.}, 8(R2):129 -- 151, 1974.

\bibitem{Boor}
C.~de~Boor.
\newblock {\em A Practical Guide to Splines}, volume~27.
\newblock Applied Mathematical Sciences, New York: Springer, 1978.

\bibitem{Dohrmann:2007}
C.~R. Dohrmann.
\newblock {An approximate BDDC preconditioner}.
\newblock {\em Numer. Linear Algebra Appl.}, 14(2):149 -- 168, 2007.

\bibitem{DryjaGalvis:2013}
M.~Dryja, J.~Galvis, and M.~Sarkis.
\newblock A {FETI-DP} preconditioner for a composite finite element and
  discontinuous {G}alerkin method.
\newblock {\em SIAM J. Numer. Anal.}, 51(1):400 -- 422, 2013.

\bibitem{EvansHughes:2013}
J.~Evans and T.~J.~R. Hughes.
\newblock Explicit trace inequalities for isogeometric analysis and parametric
  hexahedral finite elements.
\newblock {\em Numer. Math.}, 123(2):259 -- 290, 2013.

\bibitem{FarhatRoux:1991a}
C.~Farhat and F.-X. Roux.
\newblock A method of finite element tearing and interconnecting and its
  parallel solution algorithm.
\newblock {\em Int. J. Numer. Methods Eng.}, 32(6):1205 -- 1227, 1991.

\bibitem{GolubLoan:2013}
G.~H. Golub and C.~F. Van~Loan.
\newblock {\em {Matrix Computations}}.
\newblock The John Hopkins University Press, 2013.

\bibitem{HoferLanger:2017c}
C.~Hofer and U.~Langer.
\newblock Dual-primal isogeometric tearing and interconnecting solvers for
  multipatch {dG-IgA} equations.
\newblock {\em Comput. Methods Appl. Mech. Eng.}, 316:2 -- 21, 2017.

\bibitem{Hofer:2019a}
C.~Hofer and U.~Langer.
\newblock {\em Dual-Primal Isogeometric Tearing and Interconnecting Methods},
  pages 273 -- 296.
\newblock Springer International Publishing, Cham, 2019.

\bibitem{HoferLanger:2019b}
C.~Hofer, U.~Langer, and I.~Toulopoulos.
\newblock Isogeometric analysis on non-matching segmentation: discontinuous
  {G}alerkin techniques and efficient solvers.
\newblock {\em J. Appl. Math. Comput.}, 61(1):297 -- 336, 2019.

\bibitem{CottrellHughes:2005}
T.~J.~R. Hughes, J.~Cottrell, and Y.~Bazilevs.
\newblock Isogeometric analysis: {CAD}, finite elements, {NURBS}, exact
  geometry and mesh refinement.
\newblock {\em Comput. Methods Appl. Mech. Eng.}, 194(39):4135 -- 4195, 2005.

\bibitem{KlawonnRheinbach:2007}
A.~Klawonn and O.~Rheinbach.
\newblock Inexact {FETI-DP} methods.
\newblock {\em Int. J. Numer. Methods Eng.}, 69(2):284 -- 307, 2007.

\bibitem{Kleiss:2012}
S.~K. Kleiss, C.~Pechstein, B.~J{\"u}ttler, and S.~Tomar.
\newblock {IETI}-isogeometric tearing and interconnecting.
\newblock {\em Comput. Methods Appl. Mech. Eng.}, 247-248(11):201 -- 215, 2012.

\bibitem{LiWidlund:2007}
J.~Li and O.~B. Widlund.
\newblock {On the use of inexact subdomain solvers for BDDC algorithms}.
\newblock {\em Comput. Methods Appl. Mech. Eng.}, 196(8):1415 -- 1428, 2007.

\bibitem{MandelDohrmannTezaur:2005a}
J.~Mandel, C.~R. Dohrmann, and R.~Tezaur.
\newblock An algebraic theory for primal and dual substructuring methods by
  constraints.
\newblock {\em Appl. Numer. Math.}, 54(2):167 -- 193, 2005.

\bibitem{gismoweb}
A.~Mantzaflaris, R.~Schneckenleitner, S.~Takacs, and others~(see website).
\newblock {G+Smo (Geometry plus Simulation modules)}.
\newblock \url{http://github.com/gismo}, 2022.

\bibitem{Pechstein:2013a}
C.~Pechstein.
\newblock {\em Finite and Boundary Element Tearing and Interconnecting Solvers
  for Multiscale Problems}.
\newblock Springer, Heidelberg, 2013.

\bibitem{SandeManni:2019}
E.~Sande, C.~Manni, and H.~Speleers.
\newblock Sharp error estimates for spline approximation: Explicit constants,
  $n$-widths, and eigenfunction convergence.
\newblock {\em Math. Models Methods Appl. Sci.}, 29(06):1175 -- 1205, 2019.

\bibitem{SangalliTani:2016}
G.~Sangalli and M.~Tani.
\newblock Isogeometric preconditioners based on fast solvers for the
  {S}ylvester equation.
\newblock {\em SIAM J. Sci. Comput.}, 38(6):A3644 -- A3671, 2016.

\bibitem{SchneckenleitnerTakacs:2019}
R.~Schneckenleitner and S.~Takacs.
\newblock {Condition number bounds for IETI-DP methods that are explicit in $h$
  and $p$}.
\newblock {\em Math. Models Methods Appl. Sci.}, 11(30):2067 -- 2103, 2020.

\bibitem{SchneckenleitnerTakacs:2020}
R.~Schneckenleitner and S.~Takacs.
\newblock Convergence theory for {IETI-DP} solvers for discontinuous {G}alerkin
  {I}sogeometric {A}nalysis that is explicit in $h$ and $p$.
\newblock {\em Comput. Methods Appl. Math.}, 22(1):199 -- 225, 2022.

\bibitem{SchneckenleitnerTakacs:2022}
R.~Schneckenleitner and S.~Takacs.
\newblock {IETI-DP methods for discontinuous Galerkin multi-patch Isogeometric
  Analysis with T-junctions}.
\newblock {\em Comput. Methods Appl. Mech. Eng.}, 393(Article 114694), 2022.

\bibitem{Takacs:2019}
S.~Takacs.
\newblock {Discretization error estimates for discontinuous Galerkin
  Isogeometric Analysis}.
\newblock {\em Appl. Anal.}, 2021.
\newblock Online first.

\end{thebibliography}

%##############################################################################%
\end{document}